\theoremstyle{theorem}
\newtheorem{theorem}{Theorem}
\newtheorem{corollary}[theorem]{Corollary}
\newtheorem{example}{Example}
\theoremstyle{definition}
\newtheorem{definition}[theorem]{Definition}
\begin{document}

\title{Two Binary trees of Rational numbers——the S-tree and the SC-tree}
\markright{The S-tree and the SC-tree}
\author{Ziting Wang \and Ruijia Guo \and Yixin Zhu }

\maketitle

\begin{abstract}
In this study, we explore a novel approach to demonstrate the countability of rational numbers and illustrate the relationship between the Calkin-Wilf tree and the Stern-Brocot tree in a more intuitive manner. By employing a growth pattern akin to that of the Calkin-Wilf tree, we construct the S-tree and establish a one-to-one correspondence between the vertices of the S-tree and the rational numbers in the interval $(0,1]$ using 0-1 sequences. To broaden the scope of this concept, we further develop the SC-tree, which is proven to encompass all positive rational numbers, with each rational number appearing only once. We also delve into the interplay among these four trees and offer some applications for the newly introduced tree structures.

\noindent\textbf{Keywords.}\ \ S-tree, SC-tree, binary tree, Calkin-Wilf tree, Stern-Brocot tree, Fibonacci sequence
\end{abstract}


\section{Introduction}
\ 

The set of rational numbers is demonstrably countable, a fact exemplified by the conventional approach employing a serpentine enumeration on a square lattice, as depicted in Figure \ref{countable}. Nonetheless, it is worth acknowledging that within this method, numerous redundancies emerge, for instance, $\frac{1}{2}=\frac{2}{4}=\frac{3}{6}=\frac{4}{8}=\cdots$. These repetitive representations may obfuscate the method's inherent intuitiveness.
\begin{figure}[ht]
    \centering
    \includegraphics[width=0.55\textwidth]{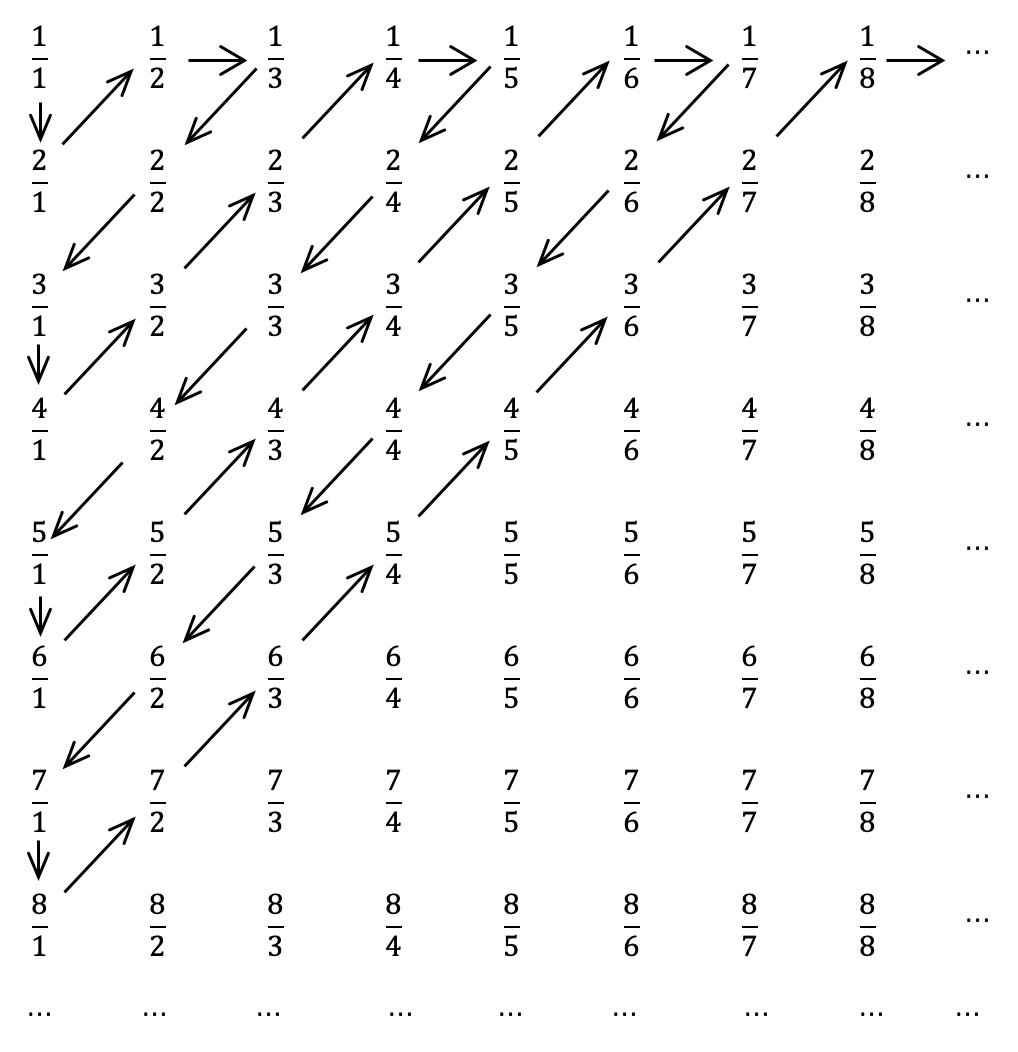}
    \caption{Illustration of the countability of the positive rationals}
    \label{countable}
\end{figure} 

It is of significance to highlight that both the Stern-Brocot tree\cite{Stern, Brocot} and the Calkin-Wilf tree\cite{Calkin} are two renowned binary tree structures that offer a one-to-one correspondence with reduced positive rational numbers, eliminating any such redundancy.

The Stern-Brocot tree is an infinite binary tree, rooted in the Stern-Brocot sequence, a sequence independently discovered by Moritz Stern (1858) and Achille Brocot (1861)\cite{Stern, Brocot}. To generate new terms at level $n+1$ in the Stern-Brocot sequence from two adjacent terms at level $n$, they employed the Mediants method, represented as $\frac{a_1}{b_1}\bigoplus \frac{a_2}{b_2}=\frac{a_1+a_2}{b_1+b_2}$, as visually illustrated in Figure \ref{Stern-Brocot-image}.

\begin{figure}[ht]
    \centering
    \includegraphics[width=0.6\textwidth]{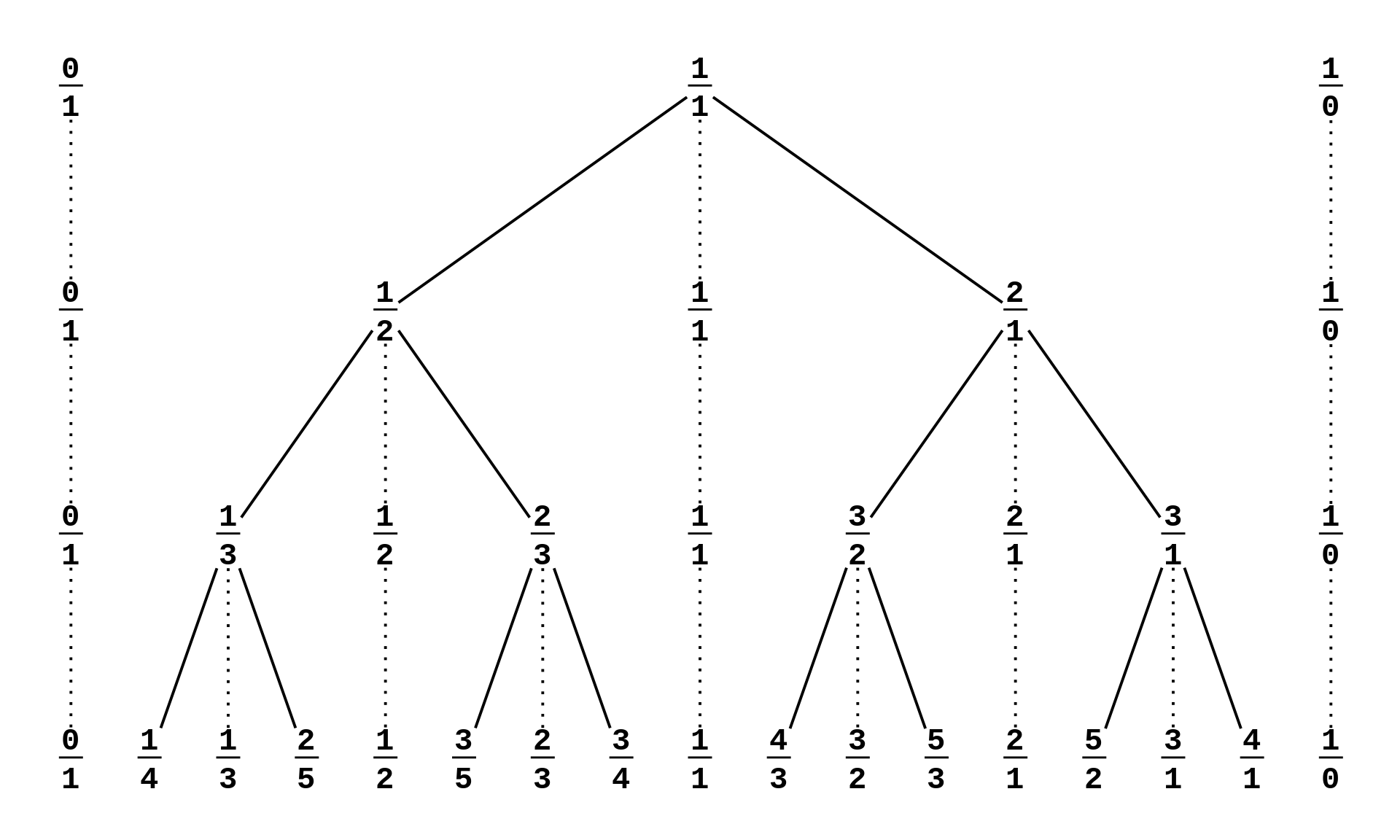}
    \caption{The Stern–Brocot tree, and the Stern–Brocot sequences of level 1-4 }
    \label{Stern-Brocot-image}
\end{figure}

In contrast, the construction of the Calkin-Wilf tree follows a more straightforward approach. The Calkin-Wilf tree takes $\frac{1}{1}$ as its root vertex, and for any vertex $\frac{a}{b}$, its left child is $\frac{a}{a+b}$, and its right child is $\frac{a+b}{b}$, as depicted in Figure \ref{Calkin-Wilf-image}.

\begin{figure}[ht]
    \centering
    \includegraphics[width=0.5\textwidth]{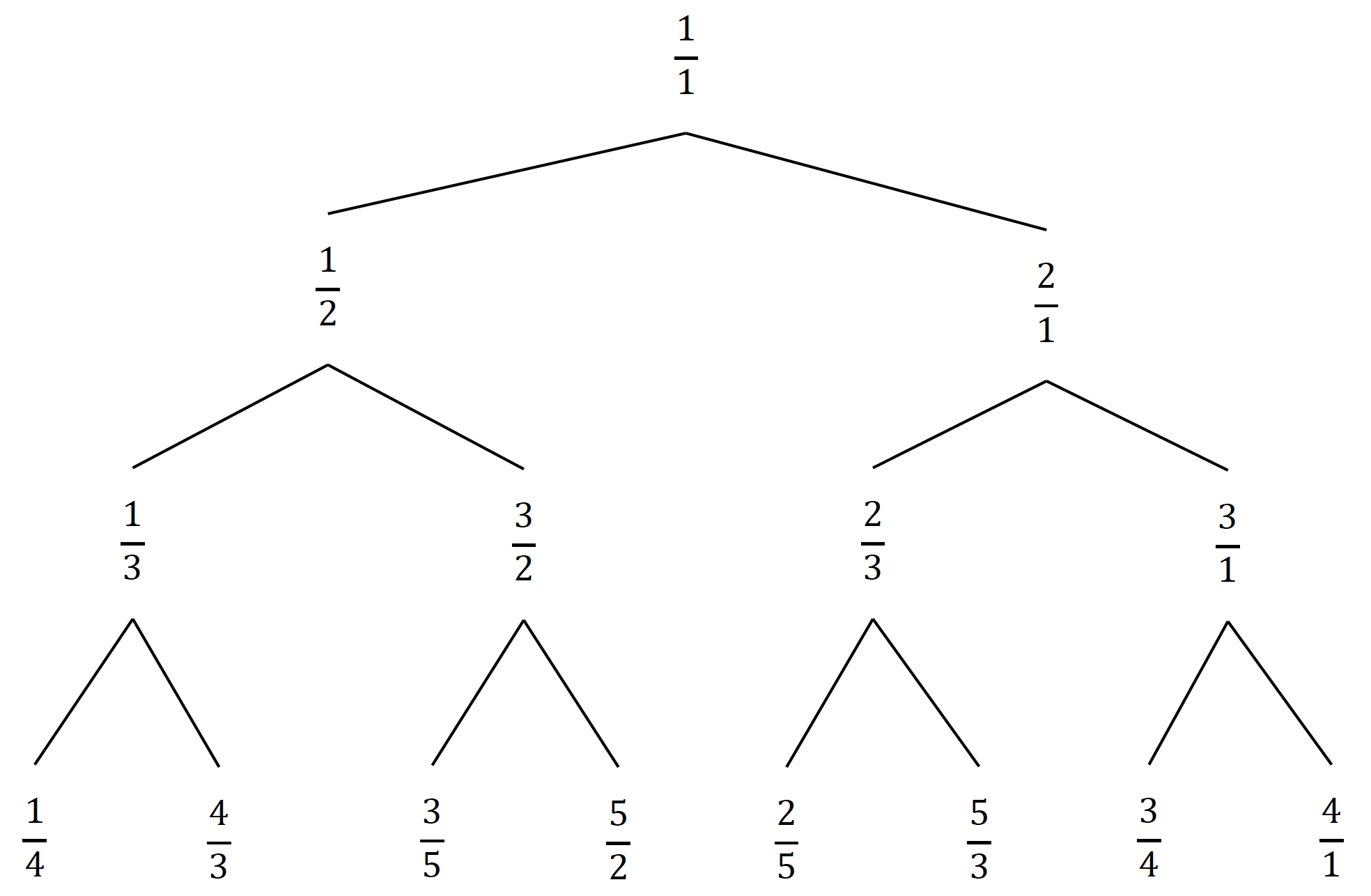}
    \caption{The Calkin-Wilf tree of leverl 1-4}
    \label{Calkin-Wilf-image}
\end{figure}

Discussions concerning these binary trees predominantly revolve around their interrelation and the algorithms designed to determine the position of a given rational number or its inverse counterpart, namely, discovering the corresponding rational number for a given position. Among these results, the algorithms highlighting the connections between the Stern-Brocot tree and the Calkin-Wilf tree might be the most noticeable. Nevertheless, existing findings still lack a certain degree of intuitiveness. Bruce Bates et al. employed the continued fractions of rationals and additional factors to address the challenges of vertex positioning in both the Stern-Brocot tree and the Calkin-Wilf tree\cite{Bates2010, Bates}.

The addition factors they defined as the role of indicators of $j$ just like: $$\lambda_i={\lceil \log_2{j_i}\rceil}\ ,\ i=0,1,2,\cdots ,k$$ where $$j_i=\left\{
\begin{aligned}
    &j,\ \ \ \ \ \ \ \ \ \ \ \ \ \ \ \ \ \ \ \ \ \mathrm{for}\ \ i=0 \\
    &2^{\lambda_{i-1}}-j_{i-1},\ \ \mathrm{for}\ \ i=1,2,\cdots,k \\
\end{aligned}
    \right.$$ 
and $k$ is the smallest value of $i$ for which $\log_2{j_i}= \lceil \log_2{j_i}\rceil $, $j_k =2^{\lambda_{k}}$ ( $\lceil x\rceil$ represents the ceiling function, which rounds the real number 
$x$ to the smallest integer not less than 
$x$ )\cite{Bates}. 
More skilfully, Joao F. Ferreira and Alexandra Mendes delved into the realm of matrices. They discovered that by pre-multiplying $(1,1)^{T}$ or post-multiplying $(1,1)$ to each matrix on the tree of the product of \textbf{L} and \textbf{R} (as depicted in Figure \ref{LR-image}), where the matrices \textbf{L} and \textbf{R} are defined as
$\textbf{L}=\begin{pmatrix}
    1 & 0\\
    1 & 1
\end{pmatrix}$ and $\textbf{R}=\begin{pmatrix}
    1 & 1\\
    0 & 1
\end{pmatrix}$ .  
The vertices of the Stern-Brocot tree and the Calkin-Wilf tree can be determined as follows: for the vector $(x,y)^{T}$, it corresponds to the rational number $\frac{x}{y}$ on the Stern-Brocot tree, and for the vector $(x,y)$, it corresponds to the rational number $\frac{y}{x}$ on the Calkin-Wilf tree\cite{Ferreira}. However, it is worth noting that both the addition factors and the matrix forms can be seen as somewhat specialized techniques. The addition factors, to some extent, lack comprehensive details regarding their connections and the final algorithms, especially concerning their relationship with continued fractions. As for the matrices, their calculations may demand practiced skills, potentially introducing challenges when attempting to locate the position of a given rational number. Consequently, these approaches are more akin to verifiable theorems, accurate but somewhat devoid of heuristic intricacies.

\begin{figure}[ht]
    \centering
    \includegraphics[width=0.85\textwidth]{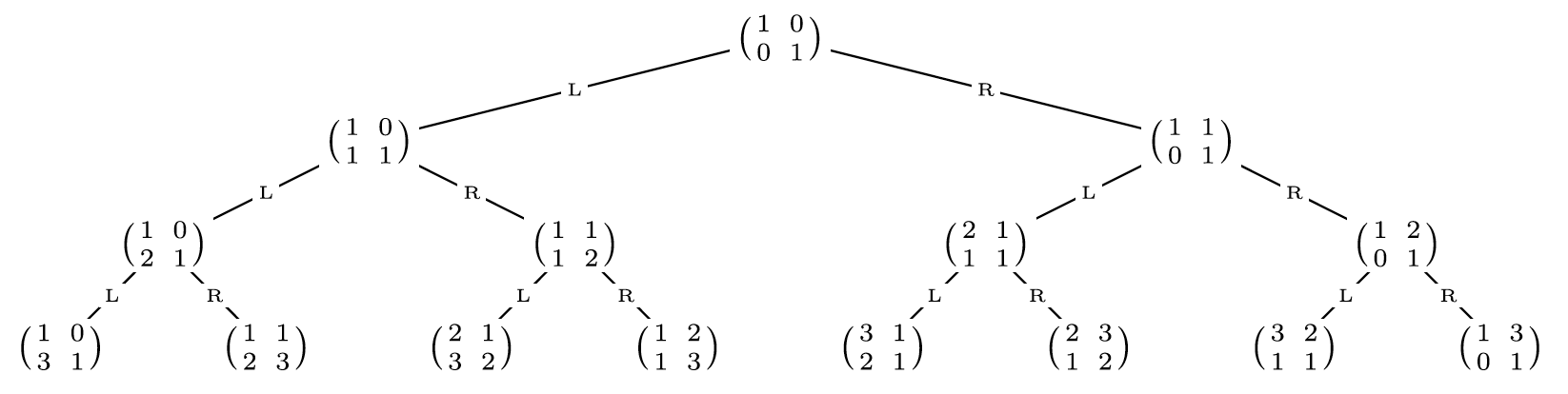}
    \caption{The tree of product of \textbf{L} and \textbf{R}}
    \label{LR-image}
\end{figure}

To enhance clarity, it would be intriguing to explore the links between the process of generating vertices and the construction of continued fractions. In an effort to provide a more intuitive perspective on both the countability of rational numbers and the structures of the two binary trees, this paper introduces the S-tree and the SC-tree. These constructs aim to strike a balance between intuitiveness and complexity while capturing the fundamental patterns governing the positioning of vertices within the trees. We seek to establish a relationship between these new trees and their representation through 0-1 sequences.

To elaborate further, we intend to demonstrate that both the S-tree and the SC-tree offer an intuitive explanation for the countability of rational numbers. Moreover, by mapping 0 to the left and 1 to the right, this paper delves into the direct correlation between the structure of continued fractions for a given rational number and their corresponding 0-1 sequence within the aforementioned tree. Through this endeavor, a more cohesive and easily comprehensible link can be established among the four aforementioned trees. Consequently, this approach opens the door to various potential applications.

The structure of this paper is as follows. In Section 2, we provide the definitions of the S-tree and the SC-tree before demonstrating that they are correspondent one-to-one to the reduced rationals in $(0,1]$ and $(0,+\infty)$, respectively. In addition, we aim to present the algorithms for locating their vertices. In Section 3, we demonstrate the complete linking loop of the four trees by establishing the correspondence between the SC-tree and the Stern-Brocot tree, along with the partial linkage of the S-tree and the Calkin-Wilf tree. In Section 4, we introduce some applications that can be addressed by using the S-tree. In Section 5, we provide concluding remarks of the paper.

\section{The S-tree and the SC-tree}

\ 

As highlighted in the Introduction, several studies present crucial theorems and descriptions concerning the relationship between the Stern-Brocot tree and the Calkin-Wilf tree. Nevertheless, there is a need to enhance the intuitiveness of these concepts. Therefore, in this section, we will introduce two structures. These structures not only offer an intuitive proof of the countability of rational numbers but also provide a concise perspective on the connection between the Stern-Brocot tree and the Calkin-Wilf tree.

\subsection{The S-tree and its properties }
\ 

In this subsction, we define the S-tree and then prove that every positive rational in the interval $(0,1]$ occurs exactly once on the S-tree. 

    \begin{figure}[ht]
    \centering
    \includegraphics[width=0.19\textwidth]{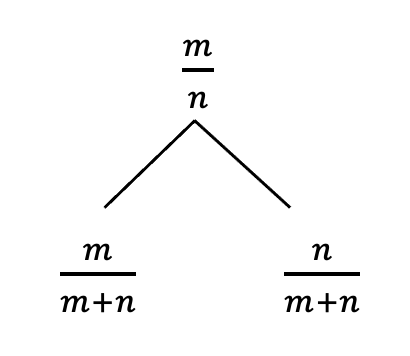}
    \caption{Growing pattern of the S-tree}
    \label{S-min-e1}
\end{figure}

\begin{definition}\label{def:e1}[S-tree]
The S-tree is an infinite complete binary tree where fractions grow on the tree according to the following two rules:
\begin{itemize}
    \item The top of the tree constitutes level 0, where $\frac{1}{1}$ is located.
    \item  For every vertex $\frac{m}{n}$ in level $k$ ( $k\ge 1$), it has its left child $\frac{m}{m+n}$ and its right child $\frac{n}{m+n}$ in level $k+1$. ( $\frac{1}{1}$ has only one child, $\frac{1}{2}$, except this, all other vertices have two different children).
\end{itemize}
\end{definition}
\noindent The growing pattern shown by Figure \ref{S-min-e1}, and Figure \ref{S-tree-e1} shows levels 0-3 of the S-tree. 

\begin{figure}[ht]
    \centering
    \includegraphics[width=0.65\textwidth]{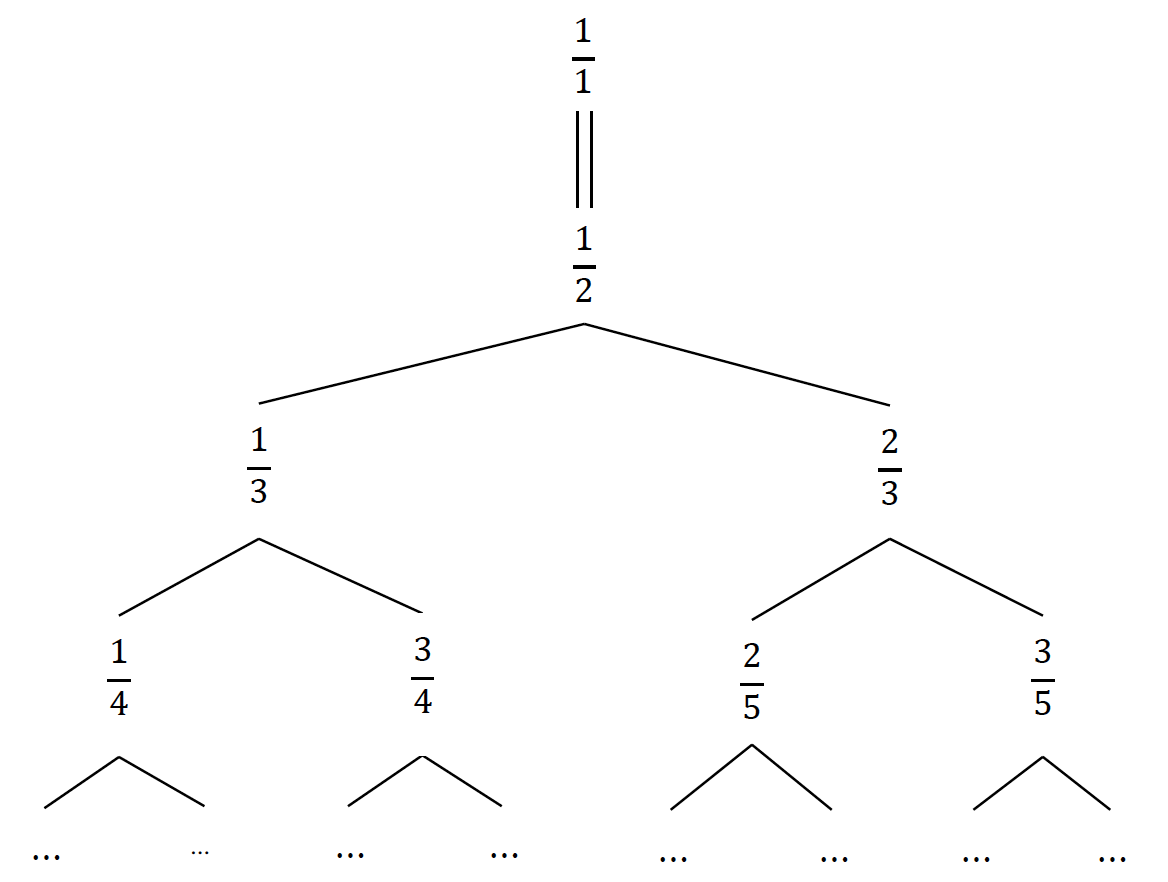}
    \caption{Levels 0-3 of the S-tree}
    \label{S-tree-e1}
\end{figure}

For all positive integers $a$ and $b$, it is evident that $a + b$ is greater than both $a$ and $b$, and $a + b$ belongs to the set of positive integers. Additionally, the greatest common factors of $(a, a + b)$ and $(b, a + b)$ are both equal to $(a, b)$. These properties can be readily observed from the S-tree:

\begin{itemize}
    \item Except $\frac{1}{1}$, every vertex's right child is greater than its left child, and the sum of these two children remains a constant 1. 
    \item All vertices are reduced rational numbers in the interval $(0,1].$
\end{itemize}
    
\begin{theorem}\label{thm:e1}
The S-tree corresponds one-to-one to reduced rational numbers in the interval $(0,1]$. 
\end{theorem}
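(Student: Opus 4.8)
The plan is to prove the correspondence in two halves. The ``easy'' half is that every vertex carries a reduced rational in $(0,1]$; this is essentially the content of the two bulleted observations, and I would make it precise by induction on the level $k$. The base cases $\frac{1}{1}$ and $\frac{1}{2}$ are clear, and if $\frac{m}{n}$ is a reduced fraction with $m\le n$, then using $\gcd(m,m+n)=\gcd(n,m+n)=\gcd(m,n)=1$ together with $m<m+n$ and $n<m+n$, both children $\frac{m}{m+n}$ and $\frac{n}{m+n}$ are again reduced and lie in $(0,1)$. Thus the vertex-to-fraction map lands in the set $Q$ of reduced rationals in $(0,1]$, with the value $1=\frac{1}{1}$ occurring only at the root, since every other vertex has numerator strictly below denominator.

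The substantive half is surjectivity and uniqueness, and for both I would run the construction backwards by defining a \emph{predecessor map} on $Q$. Given a reduced $\frac{a}{b}\in Q$ with $\frac{a}{b}\neq\frac{1}{1}$ (equivalently $a<b$), I claim its parent on the tree is forced: comparing with $\frac12$, if $2a<b$ then $\frac{a}{b}$ can only be a left child, of the reduced fraction $\frac{a}{b-a}$; if $2a>b$ it can only be a right child, of $\frac{b-a}{a}$; and $2a=b$ forces $\frac{a}{b}=\frac{1}{2}$ (since the fraction is reduced), whose parent is the root. In each case one checks the predecessor again lies in $Q$, so the predecessor map is well defined and, crucially, single-valued.

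For surjectivity I would iterate the predecessor map starting from an arbitrary $\frac{a}{b}\in Q$ and track the positive integer $a+b$. Each predecessor step sends a fraction of sum $a+b$ to one whose numerator and denominator sum to $b<a+b$ (this holds in both the left- and right-child cases), so $a+b$ strictly decreases while remaining an integer $\ge 2$; the iteration therefore terminates after finitely many steps, and it can only stop at a fraction with no predecessor, namely $\frac{1}{1}$. Reversing this finite chain records a sequence of left/right turns, i.e.\ a unique vertex whose fraction is $\frac{a}{b}$, so $\frac{a}{b}$ occurs on the tree. Uniqueness then follows from the predecessor map being single-valued: the fraction at any vertex determines, via comparison with $\frac12$, whether that vertex is the root, its unique child $\frac{1}{2}$, or a left/right child, and hence determines the entire path back to the root; since a vertex of a binary tree is determined by its root-path, no rational can label two distinct vertices.

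The step I expect to be the main obstacle is pinning down the predecessor map cleanly around the value $\frac12$: because the root is exceptional and has the single child $\frac{1}{2}$, I must verify that $\frac12$ is attained by no vertex other than that child, so that the trichotomy $2a<b$, $2a>b$, $2a=b$ unambiguously recovers ``left child,'' ``right child,'' and ``root's child.'' Establishing that the left/right classification of a non-root vertex is read off exactly from whether its value lies below or above $\frac12$ is precisely what makes both the descent and the uniqueness argument go through.
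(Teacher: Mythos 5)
Your proposal is correct and takes essentially the same approach as the paper: both invert the child rule to obtain a forced parent ($\frac{a}{b-a}$ when $2a<b$, $\frac{b-a}{a}$ when $2a>b$, the root's child when $\frac{a}{b}=\frac{1}{2}$) and then argue by descent. The differences are organizational rather than substantive: the paper packages the descent as a minimal-counterexample argument ordered by denominator and then numerator, while you iterate the predecessor map with the strictly decreasing measure $a+b$ and deduce uniqueness from the fact that a vertex's label determines its entire root path.
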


\begin{proof}
\ \ 

\begin{itemize}
    \item \textit{Every reduced positive rational number in the interval $(0,1]$ occurs at some vertices of the S-tree.} 
\end{itemize}
\noindent $\frac{1}{1}$ certainly occurs.  Otherwise, let $\frac{u}{v} \in (0,1),(u,v)=1$ be, among all fractions that do not occur, one of smallest denominator, and among the one of smallest numerator. If $v-u>u>0$, then $\frac{u}{v-u}$ doesn't occur, else its left child would be $\frac{u}{v}$, but the denominator of $\frac{u}{v-u}$ is smaller, a contradiction. If $0<v-u<u$, then $\frac{v-u}{u}$ would similarly results a contradiction. 
\begin{itemize}
    \item \textit{No reduced positive rational number in the interval $(0,1]$ occurs at more than one vertex of the S-tree.} 
\end{itemize}
\noindent $\frac{1}{1}$ certainly occurs once of the tree(if not, it would be a child of some vertices $\frac{u}{v} \in (0,1)$, but neither $\frac{u}{v+u}$ nor $\frac{v}{v+u}$ can be equal to $\frac{1}{1}$). Otherwise, let $\frac{u}{v} \in (0,1)$ with $(u,v)=1$ be, among all fractions that occur more than once, one of smallest denominator, and among the one of smallest numerator. If $v-u>u>0$, then $\frac{u}{v-u}$ occurs more than once, otherwise, its left child $\frac{u}{v}$ would only occur once, which would lead to a contradiction due to the smaller denominator of $\frac{u}{v-u}$. Similarly if $0<v-u<u$. 
\end{proof}

When counting the vertices starting from the top root and proceeding in a top-down, left-to-right order, we can easily observe that Theorem \ref{thm:e1} reveals the existence of an injective function from rationals in $(0,1]$ to the natural numbers. 

In this subsection, we built the S-tree and demonstrated that it can establish a one-to-one correspondence with reduced fractions in the interval $(0,1]$, as shown by Theorem \ref{thm:e1}.
\subsection{Locating vertices of the S-tree}
\ 

Based on Definition \ref{def:e1} and Theorem \ref{thm:e1}, every vertex can be literally located on the finite path from the top vertex to it, and subtraction is the direct operation involved when seeking its predecessors. This implies that it would be computationally easy. In this subsection, we demonstrate that, in practice, even if a fraction has a large numerator or denominator, the task of locating it is feasible when using binary arithmetic. We introduce 0-1 sequences as indicators of positions on a binary tree before linking them to the corresponding rationals.

\begin{definition}\label{def:e2}[0-1 sequence of a binary tree]
All vertices from level 2 of a binary tree can be ordered with 0-1 sequences according to the following rules:

\begin{itemize}
    \item Starting from the level that contains exactly two vertices (excluding the level with only one vertex), the left vertex is assigned the $\overline{0}$, and the right vertex is assigned the $\overline{1}$.
    \item  To generate the 0-1 sequence of a vertex's left child, add '$0$' to the right side of the vertex's 0-1 sequence. Adding '$1$' generates the 0-1 sequence of the right child.
\end{itemize}
\end{definition}

\noindent For instance, vertices $\frac{1}{4}$, $\frac{3}{4}$, $\frac{2}{5}$, $\frac{3}{5}$ of the S-tree can be assigned the 0-1 sequences $\overline{00}$, $\overline{01}$, $\overline{10}$, $\overline{11}$ respectively\footnote{Note that $\frac{1}{1}$ and $\frac{1}{2}$, the only two vertices of the S-tree without corresponding 0-1 sequences defined above, are located at the top of the tree, and their positions are obvious, so they do not affect the locating task. }.  Actually, when converting a 0-1 sequence into its binary expression, the value can indicate the serial number of the vertex in its level. For example, the vertex $\frac{2}{5}$ of the S-tree is ordered with $\overline{10}$, the value of the binary expression $\overline{10}$ is $2$, so $\frac{2}{5}$ is the 3rd vertex in its level ( counted from the far left ). The 0-1 sequence defined by Definition \ref{def:e2} is a way to describe any binary tree since every vertex on it only has two nodes as its children. This concept will be frequently referred to in the content below.

Now we introduce continued fraction  expansion as a preparatory step for the positioning algorithm. The notation $[a_0,a_1,\cdots,a_n]$ (set $a_n\geq 2$) is used to represent the continued fraction expansion$$a_0+\frac{1}{a_1+\frac{1}{\ddots +\frac{1}{a_{n-1}+\frac{1}{a_n}}}}.$$

\begin{theorem}\label{thm:en2}
For a vertex $q\in (0,1)$ of the S-tree, given by the continued fraction $q=[0,a_1,\cdots,a_k]$, it occupies the position of the $N$-th vertex in level $M$, where $M=-1+\Sigma_{i=1}^k a_i$, and $N=1+\Sigma_{i=1}^{k-1}2^{(-1+\Sigma_{j=1}^{i}a_j)}$ ( when $k=1$, set $N=1$). 
\end{theorem}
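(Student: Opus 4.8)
The plan is to prove the position formula by induction on the level $M$ (equivalently, on the quantity $\sum_{i=1}^k a_i$), reducing from a vertex to its parent. The whole argument rests on two elementary recursions that I would establish first: one describing how the coordinates $(M,N)$ of the two children are obtained from those of their parent, and one describing how the continued fraction of a child is obtained from that of its parent. Intuitively, the final formula says that the path from $\frac{1}{2}$ down to $q$, read as a word in $\{L,R\}$, decomposes into runs whose lengths are governed by $a_1,\dots,a_k$, and that the binary value $N-1$ then carries a $1$ exactly in the bit-positions $\sum_{j=1}^i a_j-1$.

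For the position recursion I would use Definition \ref{def:e2}: if a vertex sits at the $N$-th place of level $M$, then its $0$-$1$ label has value $N-1$, so appending a $0$ (the left child) produces value $2(N-1)$ and appending a $1$ (the right child) produces value $2(N-1)+1$. Hence the left child occupies position $(M+1,\,2N-1)$ and the right child position $(M+1,\,2N)$, and one checks this remains valid when the parent is $\frac{1}{2}$, read as $(M,N)=(1,1)$. For the continued-fraction recursion I would compute directly from $q=\frac{m}{n}=[0,a_1,\dots,a_k]$ that the left child $\frac{m}{m+n}$ has reciprocal $1+\frac{n}{m}$, giving $\frac{m}{m+n}=[0,a_1+1,a_2,\dots,a_k]$, while the right child $\frac{n}{m+n}$ has reciprocal $1+\frac{m}{n}$, giving $\frac{n}{m+n}=[0,1,a_1,a_2,\dots,a_k]$. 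Reading these backwards shows how to pass to the parent, and shows that a vertex in $(0,1)$ is a left child exactly when $a_1\ge 2$ and a right child exactly when $a_1=1$, consistent with the subtractive description of the parent coming from Definition \ref{def:e1}.

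With these in hand the induction is routine in structure. The base case is $\frac{1}{2}=[0,2]$, for which $M=1$ and $N=1$, matching the unique vertex of level $1$. For the inductive step I take $q=[0,a_1,\dots,a_k]$ with $\sum a_i>2$ and split into two cases. If $a_1\ge 2$, the parent is $[0,a_1-1,a_2,\dots,a_k]$; I apply the inductive hypothesis to get its position $(M',N')$ and then check that the left-child rule $N=2N'-1$ reproduces the claimed $N$. If $a_1=1$, the parent is $[0,a_2,\dots,a_k]$ and I check that the right-child rule $N=2N'$ reproduces the claimed $N$; here necessarily $k\ge 2$, since $a_1=1,\,k=1$ would force $q=1\notin(0,1)$. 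The level coordinate is immediate in both cases because $\sum a_i$ drops by exactly $1$ on passing to the parent.

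The only real work, and the step I expect to be the main obstacle, is the algebraic verification in the inductive step: I must track how the exponents $\sum_{j=1}^i a_j$ appearing in the formula shift when $a_1$ is decremented (left-child case) or when the leading coefficient is deleted and the indices slide (right-child case). In the first case every partial sum drops by $1$, so $N'=1+\sum_{i=1}^{k-1}2^{-2+\sum_{j=1}^i a_j}$ and $2N'-1$ collapses back to $1+\sum_{i=1}^{k-1}2^{-1+\sum_{j=1}^i a_j}$; in the second case the index shift turns $2N'$ into the same expression once the $i=1$ term, which equals $2^{a_1-1}=1$, is peeled off separately. Getting the empty-sum convention (the $k=1$ case, where $N=1$) and this single peeled-off term exactly right is the delicate part, and everything else is bookkeeping.
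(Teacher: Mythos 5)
Your proof is correct, and it rests on the same key lemma as the paper's: in the S-tree, the left child of $[0,a_1,a_2,\dots,a_k]$ is $[0,a_1+1,a_2,\dots,a_k]$ and the right child is $[0,1,a_1,a_2,\dots,a_k]$. Where you part ways is in how the induction is organized. The paper inducts on the number $k$ of partial quotients, unrolling the recursion from the inside out: it first locates $[0,a_k]=\frac{1}{a_k}$ as the leftmost vertex of level $a_k-1$, then observes that prepending each further coefficient $a_i$ appends one '$1$' followed by $a_i-1$ '$0$'s to the path word, so that the full 0-1 sequence of $q$ is $\overline{\underbrace{0\cdots0}_{a_k-2}1\underbrace{0\cdots0}_{a_{k-1}-1}\cdots 1\underbrace{0\cdots0}_{a_1-1}}$; the values $M$ and $N$ are then read off as the length and the binary value of this word. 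You instead induct on the level, reducing a vertex to its parent (left-child case $a_1\ge 2$, right-child case $a_1=1$) and verifying the closed formula algebraically through the position recursion $(M,N)\mapsto(M+1,2N-1)$ or $(M+1,2N)$; your bookkeeping of the shifted partial sums, the peeled-off term $2^{a_1-1}=1$, and the exclusion of the case $a_1=1$, $k=1$ are all exactly right. The trade-off is that your argument is leaner and proves precisely what the statement asserts, whereas the paper's explicit 0-1 word is a strictly stronger intermediate result that gets reused downstream (Corollary \ref{coro:01-en}, Corollary \ref{coro:02-en}, and the SC-tree analogue in Theorem \ref{thm:adde5}), so even within your organization of the induction it would be worth extracting the run-length description of the path as a corollary.
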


\begin{proof}
\ \ 

\noindent Expressing $\frac{m}{n}$ as $\frac{1}{1+\frac{n-m}{m}}$ or $\frac{1}{1+\frac{1}{\frac{m}{n-m}}}$, allows us to understand that, if $[0,a_1,a_2,\cdots,a_n]$ represents $\frac{n-m}{m}$, then $\frac{m}{n}$ must be $[0,1,a_1,a_2,\cdots,a_n]$. Furthermore, if  $[0,a_1,a_2,\cdots,a_n]$ equals $\frac{m}{n-m}$, then $\frac{m}{n}$ must be $[0,a_1+1,a_2,\cdots,a_n]$.In the context of the S-tree,  this means that adding '$1$' as the second digit of the continued fraction of a vertex can generate its right child, while adding '$1$' to the second digit can generate its left child. 

\noindent The fraction $\frac{1}{2}$, which has a continued fraction expansion $[0,2]$, obviously satisfies Theorem \ref{thm:en2}. Except for $\frac{1}{2}$ itself, all fractions in the interval $(0, 1)$ are descended from it. It's worth noting that $\frac{1}{a_k}=[0,a_k]$ is the first vertex in level $a_k-1$, so its 0-1 sequence is $\overline{\underbrace{0\cdots0}_{a_k-2}}$. Therefore, we can trace the continued fraction $q=[0,a_1,\cdots,a_k]$ starting from $[0,a_k]$. 

\noindent By induction on $k$, we can see the following result: If $a_{k-1}=1$, then $[0,a_{k-1},a_k]$ is the right child of $[0,a_k]$, so the 0-1 sequence of $[0,a_{k-1},a_k]$ is $\overline{\underbrace{0\cdots0}_{a_k-2}1}$. If $a_{k-1}> 1$, then the 0-1 sequence of $[0,a_{k-1},a_k]$ must be $\overline{\underbrace{0\cdots0}_{a_k-2}1\underbrace{0\cdots0}_{a_{k-1}-1}}$. Continuing this process, we can determine that the 0-1 sequence of $q=[0,a_1,\cdots,a_k]$ is $\overline{\underbrace{0\cdots0}_{a_k-2}1\underbrace{0\cdots0}_{a_{k-1}-1}1\underbrace{0\cdots0}_{a_{k-2}-1}\cdots 1\underbrace{0\cdots0}_{a_{1}-1}}$. The length of this 0-1 sequence is $\Sigma_{i=1}^k a_i-2$. Referring to Definition \ref{def:e2}, we can determine that the level where $q$ is located is $M=-1+\Sigma_{i=1}^k a_i$. Furthermore, it is the $(1+\Sigma_{i=1}^{k-1}2^{(-1+\Sigma_{j=1}^{i}a_j)})$-th vertex in that level (when $k=1$, set $N=1$).

\noindent The result follows.
\end{proof}

\begin{example}\label{exam:01-en}
   The continued fraction expansion of $\frac{328}{853}$ is $\frac{1}{2+\frac{1}{1 +\frac{1}{1+\frac{1}{1 +\frac{1}{1+\frac{1}{65}}}}}}$, denoted as $\frac{328}{853}=[0,2,1,1,1,1,65].$ From Theorem \ref{thm:en2}, we have $$M=-1+2+1+1+1+1+65=70,$$
\begin{equation*}
    \begin{split} N&=1+2^{-1+2+1+1+1+1}+2^{-1+2+1+1+1}+2^{-1+2+1+1}+2^{-1+2+1}+2^{-1+2}\\
    &=1+2^5+2^4+2^3+2^2+2^1\\
    &=63.
    \end{split}
\end{equation*}

\noindent Therefore, $\frac{328}{853}$ is the 63rd vertex in level 70 of the S-tree.
\end{example}

The unit fraction $\frac{1}{n}$ is always the first vertex in level $n-1$ of the S-tree, so $\frac{n-1}{n}$ is consistently the second vertex in level $n-1$. Similarly, for the S-tree, we have:

\begin{corollary}\label{coro:01-en}
\ \ 

\begin{itemize}
    \item $\forall n>2,n\in Z_{+}$, the vertex $\frac{n-1}{n}$ corresponds 0-1 sequence $\overline{\underbrace{0\cdots0}_{n-3}1}$. 
    \item $\forall n>1,n\in Z_{+}$, the vertex $\frac{2n-1}{2n+1}$ corresponds 0-1 sequence $\overline{1\underbrace{0\cdots0}_{n-2}1}$. 
    \item $\forall n>1,n\in Z_{+}$, the vertex $\frac{3n-2}{3n+1}$ corresponds 0-1 sequence $\overline{01\underbrace{0\cdots0}_{n-2}1}$, and the vertex $\frac{3n-1}{3n+2}$ corresponds $0-1$ sequence $\overline{11\underbrace{0\cdots0}_{n-2}1}$. 

\end{itemize}
\end{corollary}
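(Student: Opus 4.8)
The plan is to obtain each family's 0-1 sequence by first writing out its regular continued fraction expansion and then feeding that expansion into the explicit formula produced in the proof of Theorem~\ref{thm:en2}. Recall that that proof establishes, for $q=[0,a_1,\dots,a_k]$, the 0-1 sequence
$$\overline{\underbrace{0\cdots0}_{a_k-2}1\underbrace{0\cdots0}_{a_{k-1}-1}1\cdots 1\underbrace{0\cdots0}_{a_1-1}}.$$
Thus the whole corollary reduces to four continued-fraction computations together with a substitution into this formula; no new structural facts about the S-tree are needed.

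First I would compute the expansions of the three ``clean'' families. Writing $\frac{n-1}{n}=\frac{1}{1+\frac{1}{n-1}}$ gives $\frac{n-1}{n}=[0,1,n-1]$, valid for $n>2$ since then the last partial quotient $n-1\ge 2$ respects the convention $a_k\ge 2$. For the second family, peeling off one term shows $\frac{2n-1}{2n+1}=\frac{1}{1+\frac{2}{2n-1}}$, and since $\frac{2n-1}{2}=(n-1)+\frac12$ we get $\frac{2n-1}{2n+1}=[0,1,n-1,2]$. An identical reduction with denominator $3$ yields $\frac{3n-2}{3n+1}=[0,1,n-1,3]$, because $\frac{3n-2}{3}=(n-1)+\frac13$.

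Second, I would substitute. For $[0,1,n-1]$ the formula gives $\overline{\underbrace{0\cdots0}_{n-3}1}$; for $[0,1,n-1,2]$ it gives $\overline{1\underbrace{0\cdots0}_{n-2}1}$; and for $[0,1,n-1,3]$ it gives $\overline{01\underbrace{0\cdots0}_{n-2}1}$, matching the first three assertions exactly. Throughout I would keep an eye on the degenerate small-$n$ cases, where one or more of the zero-blocks is empty (for instance $n=3$ in the first family, giving $\overline{1}$, or $n=2$ in the second, giving $\overline{11}$), and confirm these directly against the tree.

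The one genuinely irregular case, and where I expect the only real care to be required, is $\frac{3n-1}{3n+2}$. Here the reduction produces $\frac{3n-1}{3}=(n-1)+\frac23$ rather than a clean unit residue, and since $\frac23=\frac{1}{1+\frac12}$ this forces an extra partial quotient, giving $\frac{3n-1}{3n+2}=[0,1,n-1,1,2]$ with $k=4$. Substituting into the formula, the zero-blocks of lengths $a_4-2=0$, $a_3-1=0$, $a_2-1=n-2$, $a_1-1=0$ produce $\overline{11\underbrace{0\cdots0}_{n-2}1}$, precisely the claimed sequence; the two leading $1$'s (rather than a $0$ appearing) are exactly the signature of that extra $a_3=1$. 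Verifying that this single extra step is the only deviation from the pattern of the first three families is the main thing to get right.
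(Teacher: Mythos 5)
Your proposal is correct and matches the paper's (implicit) argument: the paper states this corollary as a direct consequence of Theorem~\ref{thm:en2}, exactly as you do, by computing the continued fraction expansions $[0,1,n-1]$, $[0,1,n-1,2]$, $[0,1,n-1,3]$, $[0,1,n-1,1,2]$ and substituting them into the 0-1 sequence formula from that theorem's proof. Your attention to the degenerate small-$n$ cases and to the extra partial quotient in the $\frac{3n-1}{3n+2}$ family is exactly the care the computation requires.
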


\noindent Of course, Corollary\ref{coro:01-en} can be expanded more broadly to any reduced rational number $\frac{n-d}{n}, d,n\in Z_{+}, 0<d<n$, and the number of the 0-1 sequence categories would be $\varphi(d)$, where $\varphi(x)$ represents the Euler function.

As shown by Theorem \ref{thm:e1}, all vertices of the S-tree can be one-to-one corresponded with reduced positive rationals in the interval $(0,1]$. Moreover, Theorem \ref{thm:en2} naturally associates the 0-1 sequence and continued fraction of a vertex, and it provides an algorithm for locating the position of a vertex of the S-tree. While it is easy to extend the property that rationals within $(0,1]$ are countable to the set $Q_+$, we choose to construct a tree similar to the Calkin-Wilf tree or the Stern-Brocot tree, which encompasses all positive rationals. In the next subsection, we will delve into the SC-tree , which serves as an optimal solution for the expanding purpose. 

\subsection{The SC-tree and its properties }
\ 

In this subsection, we will introduce a new structure called the SC-tree, designed to extend the range of corresponding rationals beyond the constraints of the S-tree, which is limited to the interval $(0,1]$. To achieve this, we will outline its properties and demonstrate that every vertex of the SC-tree can establish a one-to-one correspondence with rationals from the set $\mathbf{Q_+}$.

When tracking the origin of every vertex's denominator on the S-tree, we can gain deeper insight into another structure within it. Replace the root $\frac{1}{1}$ with $\frac{a}{b}$ and keep the growth rules, then the S-tree would appear as the tree shown in Figure \ref{image:a-b-tree-en}, where every vertex has a denominator in the form of '$ma+nb$'.

 \begin{figure}[ht]
  \centering
  \includegraphics[width=0.95\textwidth]{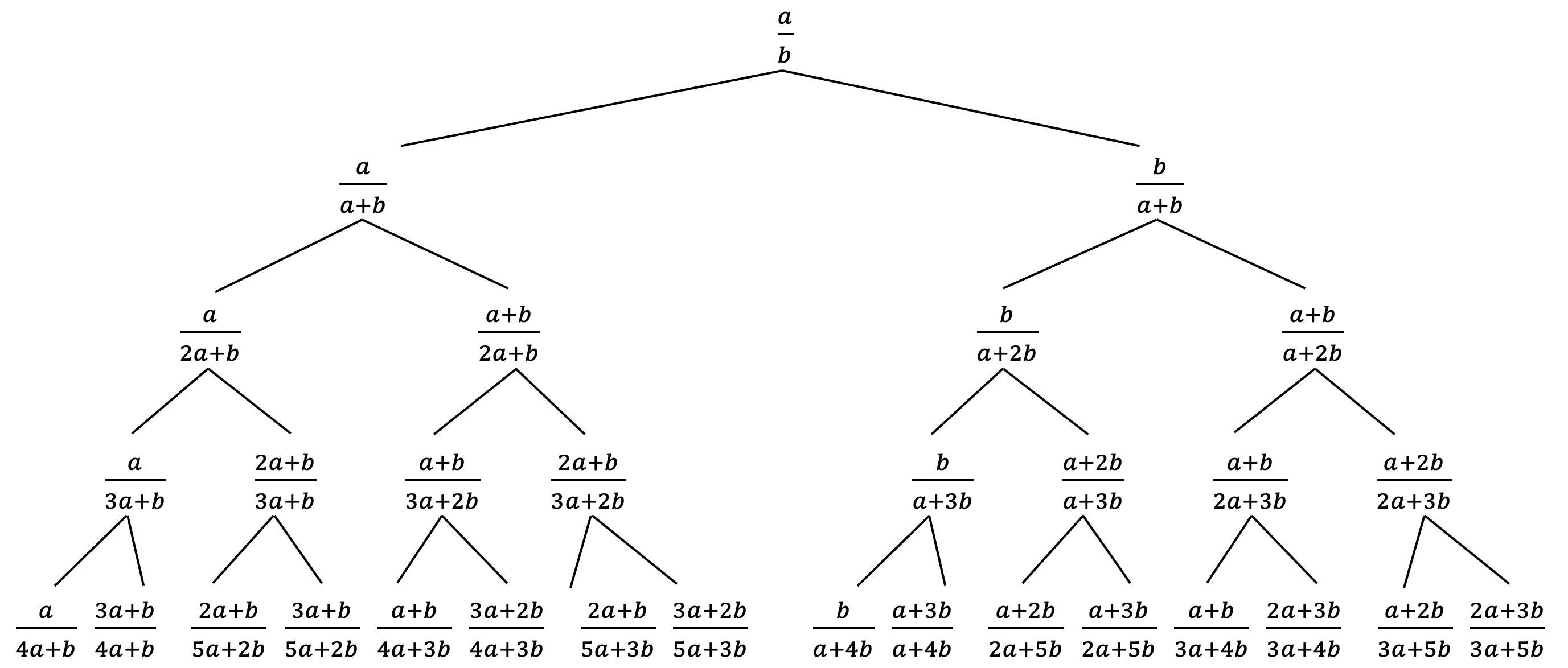}
  \caption{\label{image:a-b-tree-en}S-tree with the root vertex $\frac{a}{b}$}
\end{figure}    

\begin{definition}\label{def:e3}[SC-tree]
The SC-tree is an infinite complete binary tree defined by the following two rules:
\begin{itemize}
    \item  Replace every two vertices of the S-tree whose denominators are both '$ma+nb$' with one fraction $\frac{m}{n}$.
    \item  Add two pseudo-fractions, $\frac{1}{0}$ and $\frac{0}{1}$, at level 0. Define the fraction $\frac{1}{1}$ as the right child of $\frac{1}{0}$ and the left child of $\frac{0}{1}$.
\end{itemize}
\end{definition}

\noindent Figure \ref{image:Cab-tree-en} shows levels 0-4 of the SC-tree.  

 \begin{figure}[ht]
  \centering
  \includegraphics[width=0.86\textwidth]{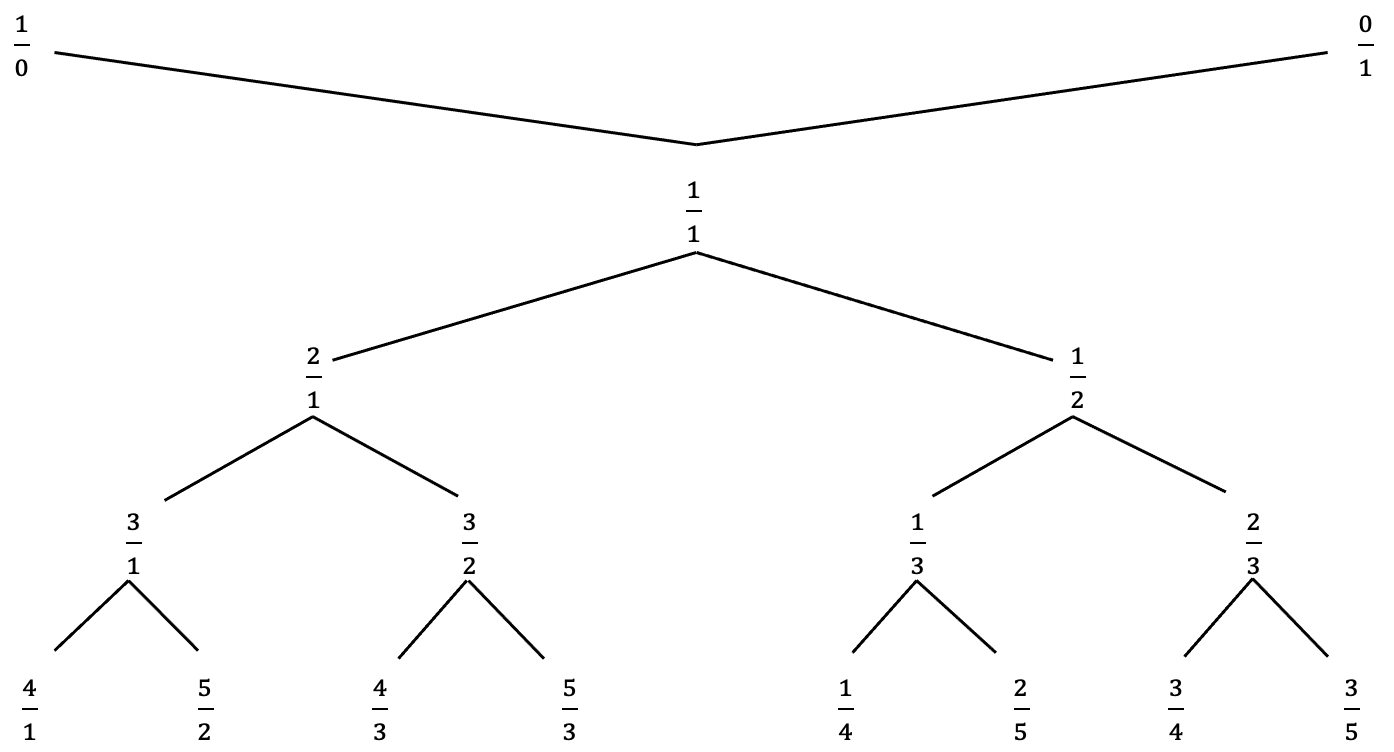}
  \caption{\label{image:Cab-tree-en}Levels 0-4 of the SC-tree}
\end{figure}

\begin{definition}\label{def:e4}[ Mediants operation: $\bigoplus$]
\ \ $\frac{m_1}{n_1} \bigoplus \frac{m_2}{n_2} \triangleq \frac{m_1+m_2}{n_1+n_2}$.
\end{definition}

\begin{definition}\label{def:e5}[Left and Right Branches]The left branch of a vertex $k$, denoted as $L_{k}$, comprises the set of vertices generated by an infinite number of left movements originating from $k$. $L_{k}$ includes the vertex $k$. Similarly, the right branch of a vertex $k$, denoted as $R_{k}$, consists of the set of vertices generated by an infinite number of right movements starting from $k$. $R_{k}$ also includes the vertex $k$.

\end{definition}

\begin{theorem}\label{thm:adde3}
   Let $k$ be a vertex of the SC-tree, then we have: 
   \begin{itemize}
       \item {\romannumeral1}\ \  If $k$ is the right child of vertex $s$, and $s$ is a child vertex of $t$, then $k=s\bigoplus t$. 
       \item {\romannumeral2}\ \ If $k$ is the right child of vertex $s$, and $s$ is a child vertex of $t$, then $L_{k}=\{k_0=k,k_1,\cdots,k_i,\cdots\}$ satisfies $k_i=k_{i-1}\bigoplus t(i\ge 1)$. 
       \item {\romannumeral3}\ \ For integers $n\ge 2, 1\le w\le 2^{n-2}$, if $k$ is the $w$-th vertex in level $n$, then the $(2^{k-2}+w)$-th vertex in level $n$ is $\frac{1}{k}$. 
   \end{itemize}
\end{theorem}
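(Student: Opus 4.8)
\emph{Proof proposal.} The plan is to first distill from the merging definition the mediant growth rule of the SC-tree and then read (i), (ii), (iii) off it. To obtain the rule I would return to the S-tree with root $\frac{a}{b}$ of Figure~\ref{image:a-b-tree-en} and record, for each vertex, its numerator and denominator as integer vectors in the basis $\{a,b\}$: the root has numerator-vector $(1,0)$ and denominator-vector $(0,1)$, and the two child rules $\frac{m}{n}\mapsto\frac{m}{m+n}$ and $\frac{m}{n}\mapsto\frac{n}{m+n}$ show that both children of any vertex share the \emph{same} denominator-vector, equal to (numerator-vector $+$ denominator-vector) of the parent. This is exactly why each sibling pair collapses to one SC-vertex. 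Writing $\alpha$ and $\beta$ for the numerator- and denominator-vectors of the common S-tree parent $P$ (read as fractions), the SC-vertex born from $P$'s children is $s=\alpha\bigoplus\beta$, its SC-parent $t$ is the SC-vertex of $P$, namely $\beta$, and pushing $P$'s two children one further level down shows that the left SC-child of $s$ is $s\bigoplus\alpha$ and the right SC-child is $s\bigoplus\beta=s\bigoplus t$. The last equality is statement (i), which therefore needs no further argument.

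For (ii) I would induct along the left branch, using the companion half of the growth rule: for every vertex $s$ with parent $t$, the \emph{other mediant parent} of $s$ — the unique fraction $\alpha$ with $\alpha\bigoplus t=s$ — produces the left child as $s\bigoplus\alpha$. If $k$ is the right child of $s$, then $k=s\bigoplus t$ by (i), so the other mediant parent of $k$ (the fraction $\gamma$ with $\gamma\bigoplus s=k$) is $t$ itself, giving the left child $k\bigoplus t=k_1$. The key observation is that taking a left child replaces the parent by the current vertex but leaves the other mediant parent unchanged; hence the same computation at $k_1$ again returns $t$ and yields $k_1\bigoplus t=k_2$, and so on. Iterating, every vertex of $L_k$ has other mediant parent $t$ and each left step adds $t$, i.e.\ $k_i=k_{i-1}\bigoplus t$.

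For (iii), understood with $2^{n-2}$ in place of the misprinted $2^{k-2}$, I would use the reciprocal symmetry of the tree. The first $2^{n-2}$ vertices of level $n$ are exactly the descendants of the left child $\frac{2}{1}$ of $\frac{1}{1}$ and the last $2^{n-2}$ are the descendants of the right child $\frac{1}{2}$, with the $w$-th and the $(2^{n-2}+w)$-th vertex occupying the \emph{same} $L/R$ address inside these two complete subtrees. Because reciprocation distributes over mediants, $\frac{1}{x\bigoplus y}=\frac{1}{x}\bigoplus\frac{1}{y}$, and because the mediant-parent pairs $\{\frac{1}{0},\frac{1}{1}\}$ of $\frac{2}{1}$ and $\{\frac{0}{1},\frac{1}{1}\}$ of $\frac{1}{2}$ are reciprocal to one another, an induction on depth shows that $\frac{p}{q}\mapsto\frac{q}{p}$ sends the subtree of $\frac{2}{1}$ onto the subtree of $\frac{1}{2}$, preserving $L/R$ addresses and keeping the two mediant-parent pairs reciprocal at every step. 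Hence the vertex at address $w$ below $\frac{2}{1}$ and the vertex at the matching address below $\frac{1}{2}$ are reciprocals, which is the claim.

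The main obstacle is the first step: justifying the mediant growth rule from the merging definition, specifically getting the left/right assignment correct (which child of which S-tree sibling becomes the left versus the right SC-child) and confirming that the SC-parent of $s$ is indeed the denominator-vector $\beta$ of $P$. Once this rule and its mediant-parent bookkeeping are secured, (ii) is a one-line induction and (iii) is a symmetry argument whose only delicate point is the index arithmetic — checking that ``the same address in the two half-subtrees'' corresponds precisely to the shift $w\mapsto 2^{n-2}+w$ and not to a full mirror reversal $w\mapsto 2^{n-1}+1-w$.
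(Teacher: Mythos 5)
Your proposal is correct, and it is in fact tighter than the paper's own argument, which follows the same skeleton (extract the mediant growth rule, then induct on the level for (iii)) but executes it more loosely. For properties (i) and (ii) the paper offers no derivation at all: it simply points to the ``coils'' in Figure \ref{e-SC-tree2}, i.e., to the S-tree with root $\frac{a}{b}$ and its merged SC-picture. Your vector bookkeeping (numerator- and denominator-vectors in the basis $\{a,b\}$, both children of an S-vertex sharing the denominator-vector $\alpha+\beta$) is exactly the algebraic content behind those figures, and it settles the one point the figures take for granted, namely which merged pair becomes the left SC-child ($s\bigoplus\alpha$) and which the right ($s\bigoplus t$). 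For property (iii), both you and the paper induct on the level using $\frac{1}{x}\bigoplus\frac{1}{y}=\frac{1}{x\bigoplus y}$ together with the index arithmetic $w\mapsto 2w-1,\,2w$; but the paper's induction step is flawed as written: it takes $k$ to be the $w$-th vertex of level $m$ with parent $s$ and grandparent $t$, and claims ``by the second property'' that the left child of $k$ is $k\bigoplus t$. Property (ii) licenses this only when $k$ is the \emph{right} child of $s$; when $k$ is a left child it fails outright --- $\frac{3}{1}$ is the left child of $\frac{2}{1}$ with grandparent $\frac{1}{1}$, yet its left child is $\frac{4}{1}$, not $\frac{3}{1}\bigoplus\frac{1}{1}=\frac{4}{2}$. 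The correct general statement is that the left child of $k$ is $k\bigoplus u$, where $u$ is the \emph{other mediant parent} of $k$, which is precisely the quantity your strengthened induction carries along (reciprocal vertices have reciprocal mediant-parent pairs, and a left step preserves the other mediant parent). So your version of (iii) does not merely match the paper's; it repairs it. Your two flagged worries also resolve the right way: the correspondence is the translation $w\mapsto 2^{n-2}+w$ rather than a mirror reversal, since reciprocation preserves left/right addresses (e.g., $\frac{3}{1}\leftrightarrow\frac{1}{3}$ are both left children), and the exponent $2^{k-2}$ in the statement is indeed a misprint for $2^{n-2}$.
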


\begin{proof}
\ 

\noindent Figure \ref{e-ab2-tree} displays the section of the S-tree starting with vertex $\frac{ua+vb}{ma+nb}$. Correspondingly, Figure \ref{e-SC-tree2} shows the section of the SC-tree.
\begin{figure}[ht]
  \centering
  \subfigure[\label{e-ab2-tree}A part of S-tree]{\includegraphics[width=0.99\linewidth]{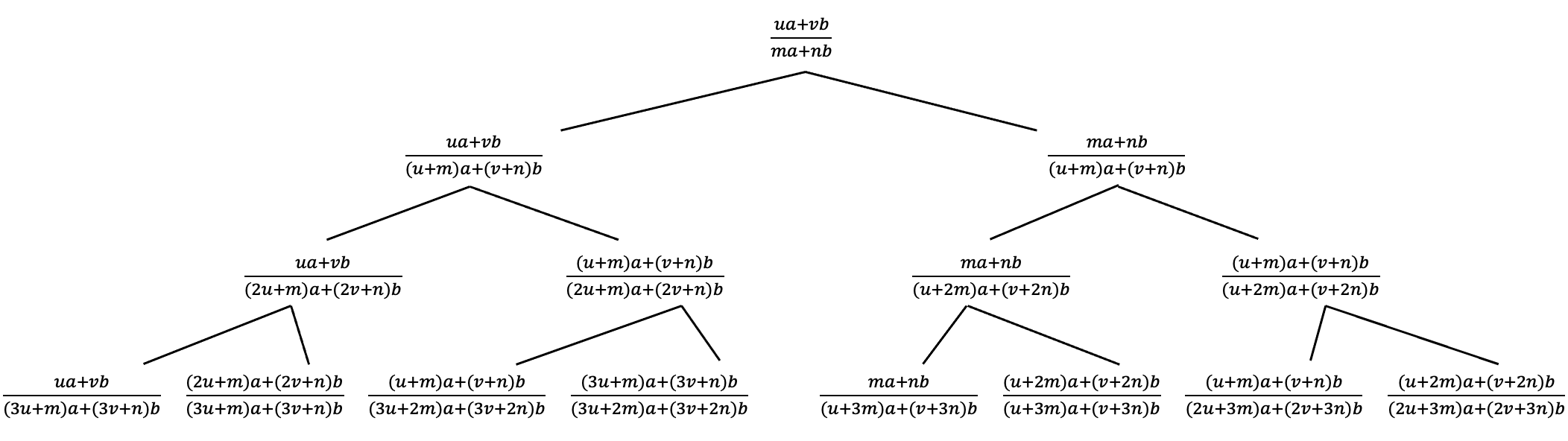}}
  \subfigure[\label{e-SC-tree2}A part of SC-tree]{\includegraphics[width=0.65\linewidth]{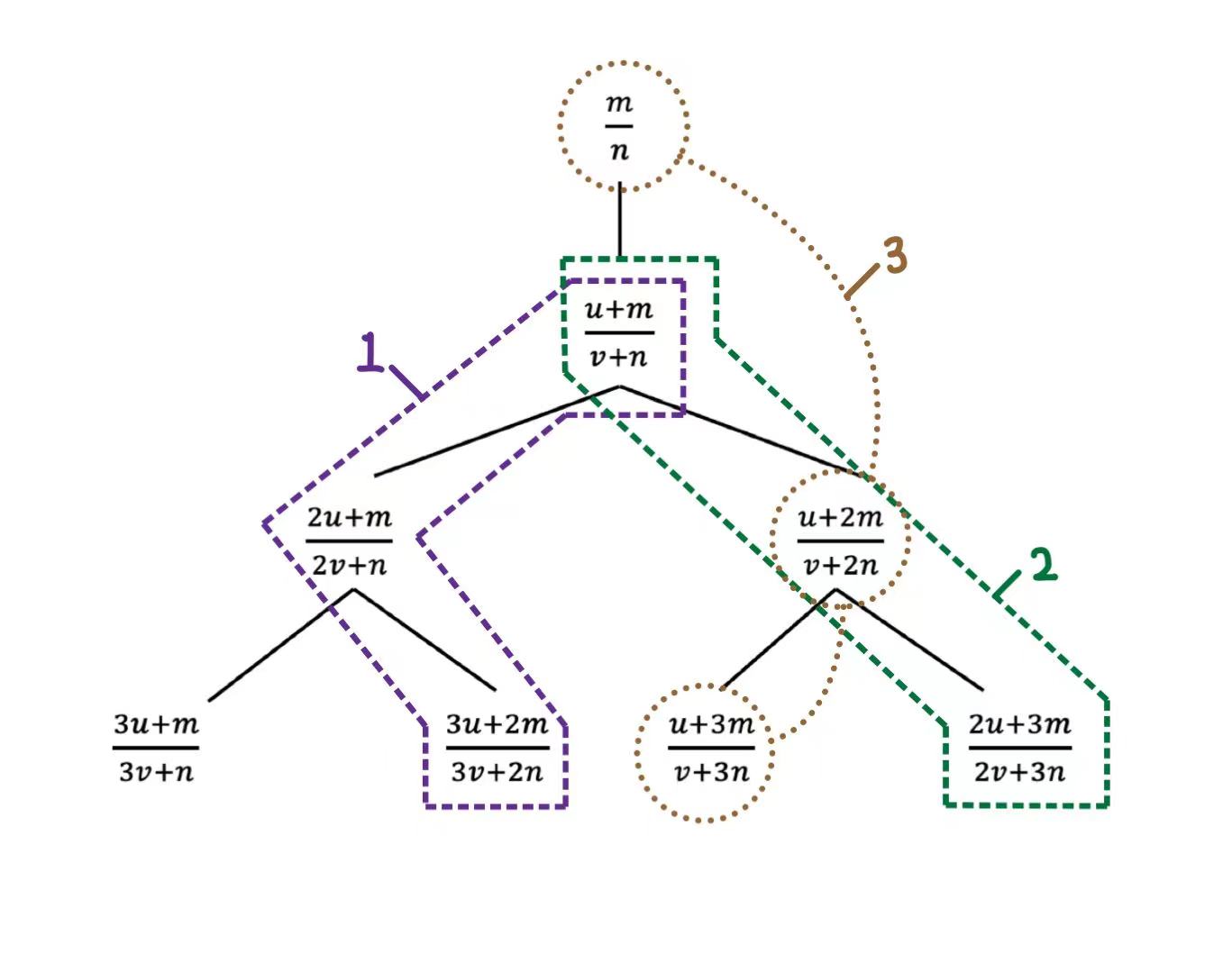}}
  \vspace{1em}
  \caption{\label{e-S-C2} }
\end{figure}

    \noindent {\romannumeral1}. \ The first property is illustrated by the 1 coil and the 2 coil shown in Figure \ref{e-SC-tree2}. \newline 
    {\romannumeral2}. \ The second property is demonstrated by the 3 coil shown in Figure \ref{e-SC-tree2}.  \newline     
    {\romannumeral3}. \ We establish the third property through induction on $n$. 
    
    \noindent The theorem is true for $n=2$, since $\frac{2}{1}$ and $\frac{1}{2}$ are reciprocal.\newline
    Now, let's assume the theorem is true for all integers $2, 3, \cdots , m$.\newline
    If $k$ and $k'$ are the $w$-th and the $(2^{m-2}+w)$-th vertex in level $m$, where $1\le w\le 2^{m-2}$, and $k$ is the child of $s$, $s$ is the child of $t$, and similarly,  if $k'$ is the child of $s'$ and $s'$ is the child of $t'$, then it follows that $k'=\frac{1}{k},s'=\frac{1}{s},t'=\frac{1}{t}$.\newline
    Now, consider the children of $k$ and $k'$: \newline   
    The right child of $k$ is the $2w$-th vertex in level $m+1$, and the right child of $k'$ is the $(2^{m-1}+2w)$-th vertex in the same level. By the first property, they correspond to $k\bigoplus s$ and $k'\bigoplus s'=\frac{1}{k}\bigoplus \frac{1}{s}=\frac{1}{k\bigoplus s}$, respectively. \newline
    The left child of $k$ is the $(2w-1)$-th vertex in level $m+1$, and the left child of $k'$ is the $(2^{m-1}+2w-1)$-th vertex in the same level. By the second property, they correspond to $k\bigoplus t$ and $k'\bigoplus t'=\frac{1}{k}\bigoplus \frac{1}{t}=\frac{1}{k\bigoplus t}$, respectively. Thus, the theorem holds for $n=m+1$.\newline
    Hence our result is true for all $n$.
\end{proof}

\begin{definition}\label{def:e6}[Addable]
For two vertices of the SC-tree $\frac{a}{b}$ and $\frac{c}{d}$, if $\frac{a}{b} \bigoplus \frac{c}{d}$ is also a vertex of the SC-tree, then $\frac{a}{b}$ and $\frac{c}{d}$ are \textbf{addable to each other}.
\end{definition}

  \noindent According to Theorem \ref{thm:adde3}, if $\frac{a}{b}$ and $\frac{c}{d}$ are addable to each other, their relationship must be one of the situations shown by Figure \ref{image:kjw-e}.

\begin{figure}[ht]
  \centering
  \includegraphics[width=0.5\textwidth]{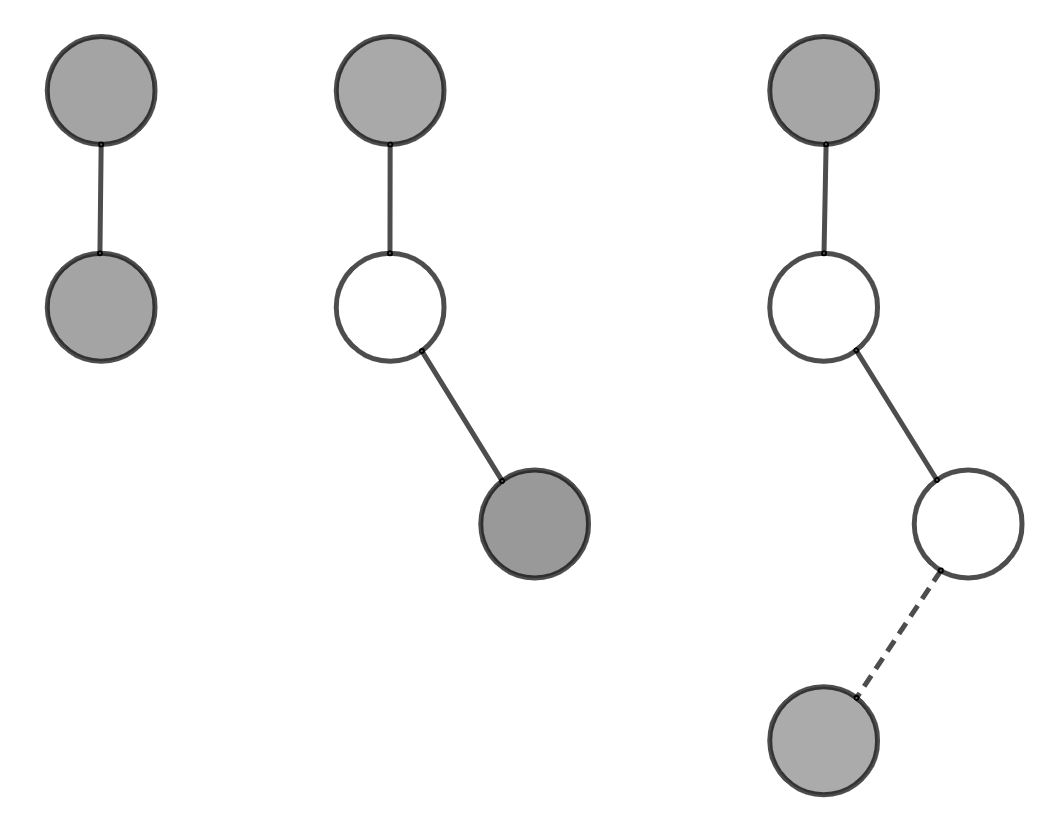}
  \caption{\label{image:kjw-e}The relationship between two vertices that are addable to each other }
\end{figure}  

\begin{theorem}\label{thm:adde3+}
   For two vertices of the SC-tree $\frac{a}{b}$ and $\frac{c}{d}$, they are addable to each other if an only if $bc-ad=\pm 1$.    
\end{theorem}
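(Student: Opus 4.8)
The plan is to treat $bc-ad=\pm 1$ as a unimodularity condition and to follow how it transforms under the mediant $\bigoplus$. For two fractions, abbreviate $D\!\left(\frac{a}{b},\frac{c}{d}\right)=bc-ad$. A direct computation yields the two identities
\[
D\!\left(\frac{a}{b},\ \frac{a}{b}\bigoplus\frac{c}{d}\right)=D\!\left(\frac{a}{b},\frac{c}{d}\right)
\quad\text{and}\quad
D\!\left(\frac{a}{b}\bigoplus\frac{c}{d},\ \frac{c}{d}\right)=D\!\left(\frac{a}{b},\frac{c}{d}\right),
\]
so taking a mediant leaves $D$ unchanged relative to either parent, while at the root the pseudo-fractions give $D\!\left(\frac{0}{1},\frac{1}{0}\right)=1$. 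From these I first record the invariant that every vertex $v$ equals the mediant $u_L\bigoplus u_R$ of its two nearest bounding ancestors $u_L<v<u_R$ and that $D(u_L,u_R)=\pm 1$. This follows by induction on the level, since forming the left (resp.\ right) child replaces the pair $(u_L,u_R)$ by $(u_L,v)$ (resp.\ by $(v,u_R)$), and the identities propagate the value $\pm 1$ from the base pair $\left(\frac{0}{1},\frac{1}{0}\right)$.

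For the forward direction, suppose $\frac{a}{b}$ and $\frac{c}{d}$ are addable. By the remark following Definition \ref{def:e6}, which rests on Theorem \ref{thm:adde3}, the pair sits in one of the configurations of Figure \ref{image:kjw-e}; in each of them the mediant $\frac{a+c}{b+d}$ is the child of the deeper of the two vertices, and $\frac{a}{b}$, $\frac{c}{d}$ are exactly the two nearest bounding ancestors of that child. The invariant above then gives $bc-ad=D\!\left(\frac{a}{b},\frac{c}{d}\right)=\pm 1$.

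The reverse direction is where the work lies, and I would argue by descent. Given vertices with $bc-ad=\pm 1$, orient them so that $\frac{a}{b}<\frac{c}{d}$, whence $bc-ad=1$, and induct on $a+b+c+d$. Assuming $c+d\ge a+b$ (the case $a+b>c+d$ is symmetric, replacing $\frac{a}{b}$ by $\frac{a-c}{b-d}$), replace $\frac{c}{d}$ by $\frac{c-a}{d-b}$, interpreted as the pseudo-fraction $\frac{1}{0}$ when $d=b$. By the first identity the new pair still has $D=1$, its total is strictly smaller, and $\frac{a}{b}\bigoplus\frac{c-a}{d-b}=\frac{c}{d}$. By the inductive hypothesis the smaller pair is addable, so its mediant $\frac{c}{d}$ is a vertex whose two nearest bounding ancestors are precisely $\frac{a}{b}$ and $\frac{c-a}{d-b}$; in particular $\frac{a}{b}$ is the left bounding ancestor of $\frac{c}{d}$, and therefore $\frac{a}{b}\bigoplus\frac{c}{d}$ is simply the left child of $\frac{c}{d}$, a genuine vertex, so the original pair is addable. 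The finitely many base pairs, built from $\frac{1}{0}$, $\frac{0}{1}$, and $\frac{1}{1}$, are checked by hand.

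The main obstacle is the logical gap in the reverse step: the arithmetic identity $\frac{c}{d}=\frac{a}{b}\bigoplus\frac{c-a}{d-b}$ holds for many choices of $\frac{a}{b}$ and does not by itself place $\frac{a}{b}$ among the ancestors of $\frac{c}{d}$ in the tree; it is the induction that upgrades this identity into the tree-theoretic statement that $\frac{a}{b}$ and $\frac{c-a}{d-b}$ are the two nearest ancestors of $\frac{c}{d}$. I also expect to spend care confirming that the subtraction never produces a negative denominator (the would-be exceptions force $b=d=1$, and there the totals order the pair the opposite way, so they do not arise), that $c+d=a+b$ cannot occur for a unimodular pair of distinct vertices, and that the descent genuinely terminates at the base pairs.
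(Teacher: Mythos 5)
Your determinant identities and the invariant ``every vertex is the mediant of a distinguished pair of ancestors with determinant $\pm 1$'' are sound, and your forward direction is essentially the paper's. The genuine gap is a circularity in your reverse direction. Your inductive hypothesis quantifies over pairs of \emph{vertices} of the SC-tree, but the descent step hands it the pair $\left(\frac{a}{b},\frac{c-a}{d-b}\right)$, and at this point in the paper nothing shows that $\frac{c-a}{d-b}$ is a vertex at all. Unimodularity gives $\gcd(c-a,d-b)=1$, but ``reduced'' is not yet known to imply ``on the tree'': that every positive reduced rational occurs is Theorem \ref{thm:e3+}, which the paper proves \emph{from} Theorem \ref{thm:adde3+}, so it cannot be assumed here. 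There are two ways to close this. One is to strengthen the induction so that vertexhood is a conclusion rather than a hypothesis: prove by your descent that every pair of nonnegative reduced fractions (allowing $\frac{1}{0},\frac{0}{1}$) with determinant $\pm1$ consists of vertices whose mediant is a vertex having exactly that pair as its producing ancestors. The other is the paper's route, which needs no descent: for a reduced vertex $\frac{c}{d}$, the nonnegative unimodular decompositions $\frac{c}{d}=\frac{e}{f}\bigoplus\frac{c-e}{d-f}$ form a \emph{unique} unordered pair (the paper's Diophantine-solution count); since the two ancestors that produce $\frac{c}{d}$ give one such decomposition and $\left\{\frac{a}{b},\frac{c-a}{d-b}\right\}$ gives another, they coincide, so $\frac{a}{b}$ is a producing ancestor of $\frac{c}{d}$ and $\frac{a}{b}\bigoplus\frac{c}{d}$ is one of the children of $\frac{c}{d}$. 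Once you invoke this uniqueness --- which you need anyway to perform the ``upgrade'' you yourself flag as the main obstacle --- the induction on $a+b+c+d$ does no further work.

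A second, smaller error: you repeatedly impose the Stern--Brocot child rule on the SC-tree. In the SC-tree the right child of a vertex $v$ is the mediant of $v$ with its \emph{parent}, and the left child is the mediant of $v$ with the other member of its producing pair; whether that member is the lower or the upper bounding ancestor depends on the path to $v$ (this is exactly how the SC-tree differs from the Stern--Brocot tree and why the levels are permuted). Concretely, in the SC-tree the left child of $\frac{2}{3}$ is $\frac{3}{4}=\frac{2}{3}\bigoplus\frac{1}{1}$, the mediant with the \emph{upper} bounding ancestor, and the right child is $\frac{3}{5}=\frac{2}{3}\bigoplus\frac{1}{2}$ (check this against Theorem \ref{thm:adde5}: $[0,1,3]\mapsto\overline{110}$, $[0,1,1,2]\mapsto\overline{111}$). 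So both ``forming the left child replaces $(u_L,u_R)$ by $(u_L,v)$'' and ``$\frac{a}{b}$ is the left bounding ancestor, therefore $\frac{a}{b}\bigoplus\frac{c}{d}$ is the left child'' are false in general. Your argument survives because all it needs is that the mediant of a vertex with either producing ancestor is \emph{some} child, but the bookkeeping should be restated in those terms.
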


\begin{proof}

\ 

\noindent "$\Rightarrow$"

     \noindent Check all vertices shown in Figure \ref{image:Cab-tree-en}. Then, we can observe that for levels 3 and above in the tree, every pair of addable vertices $\frac{a}{b}$ and $\frac{c}{d}$ satisfies $bc-ad=\pm 1$. 
     
     \noindent Suppose this holds true for all vertices in levels $n$ and above.  Assuming that the level of $\frac{a}{b}$ is smaller than that of $\frac{c}{d}$, and $\frac{a}{b} \bigoplus \frac{c}{d}=\frac{a+c}{b+d}$ is in level $n+1$((implying $bc-ad=\pm 1$). According to Theorem \ref{thm:adde3}, if $\frac{a}{b}, \frac{c}{d}$, and $\frac{a+c}{b+d}$ exhibit the relationship shown by Figure \ref{jcj-e01} or Figure \ref{jcj-e04} , then $\frac{a+2c}{b+2d}$ is on the SC-tree and $d(a+c)-c(b+d)=ad-bc=\pm 1$. In addition, if the relationship among $\frac{a}{b}, \frac{c}{d}$, and $\frac{a+c}{b+d}$ resembles Figure \ref{jcj-e02} or Figure \ref{jcj-e03} , then $\frac{2a+c}{2b+d}$ is on the SC-tree and $b(a+c)-a(b+d)=bc-ad=\pm 1$. Thus, for two tree vertices $\frac{a}{b}$ and $\frac{c}{d}$ that are addable to each other, $bc-ad=\pm 1$. 

\noindent "$\Leftarrow$"

  \noindent When $a=b=1$, then $bc-ad=\pm 1$ implies that $\frac{c}{d}$ can be one of the following cases: $\frac{1}{0}$, $\frac{0}{1}$, $\frac{1}{2}$, $\frac{2}{1}$, a vertex of $L_{\frac{2}{3}}$, a vertex of $L_{\frac{3}{2}}$. All of these cases above satisfy the addable relationship be shown in Figure \ref{image:kjw-e}.\newline
    According to Theorem \ref{thm:adde3}, now we can only focus on the case that $1\le a < b$ due to the symmetry, that is, the right subtree of the SC-tree rooted at $\frac{1}{1}$. \newline
    Consider the solutions in the positive integers $N_{+}$ for $bc-ad=\pm 1$. When $bc-ad= 1$, the integral solutions grater than 0 are $c=c_0+ta,d=d_0+tb,t=0,1,2,\cdots$, where $0< c_0< a,0< d_0< b$ represent the smallest pair of solutions. In parallel, for $bc-ad= -1$, the corresponding solutions are$c=-c_0+ta,d=-d_0+tb,t=1,2,\cdots$. Therefore,  $c=c_0, d=d_0$ and  $c=a-c_0, d=b-d_0$ are the only two pairs of solutions for $bc-ad=\pm 1$ under the constraint that $0<c<a, 0<d<b$. Furthermore, $\frac{c_0}{d_0}\bigoplus \frac{a-c_0}{b-d_0}=\frac{a}{b}$ demonstrates that $\frac{c_0}{d_o}$ and $\frac{a-c_0}{b-d_0}$ are exactly the vertices of the SC-tree producing $\frac{a}{b}$ with $\bigoplus$. As a result, $\frac{c_0}{d_o}$ ( or $\frac{a-c_0}{b-d_0}$ ) and $\frac{a}{b}$ are addable, as also illustrated in Figure \ref{image:kjw-e}.  Therefore,  for a vertex of the SC-tree $\frac{a}{b}(a\ge 1, b\ge 1)$, if integers $c,d$ satisfy $bc-ad=\pm 1$, then $\frac{a}{b}$ and $\frac{c}{d}$ are addable to each other.\newline
    The result follows.
\end{proof}

\begin{figure*}[ht]
  \centering
  \subfigure[\label{jcj-e01}relationship 1]{\includegraphics[width=0.30\linewidth]{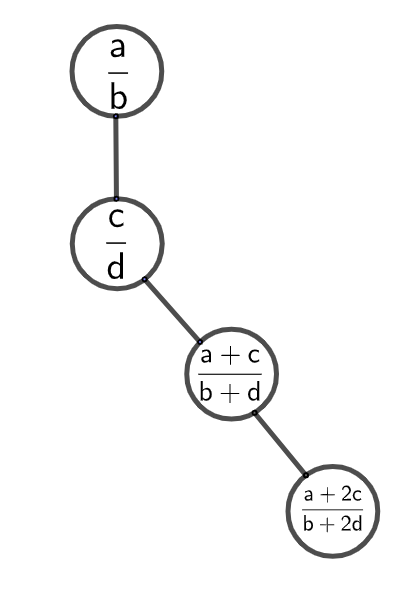}}
  \subfigure[\label{jcj-e02}relationship 2]{\includegraphics[width=0.30\linewidth]{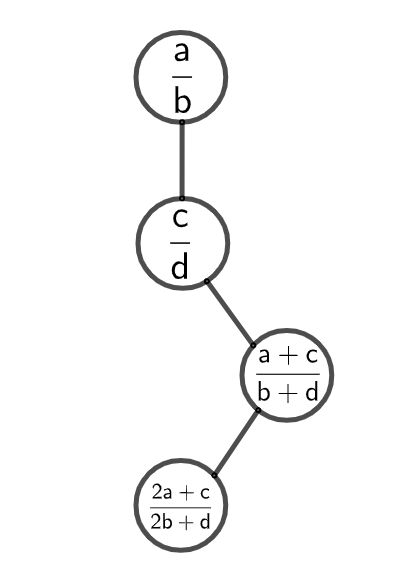}}
  \subfigure[\label{jcj-e03}relationship 3]{\includegraphics[width=0.40\linewidth]{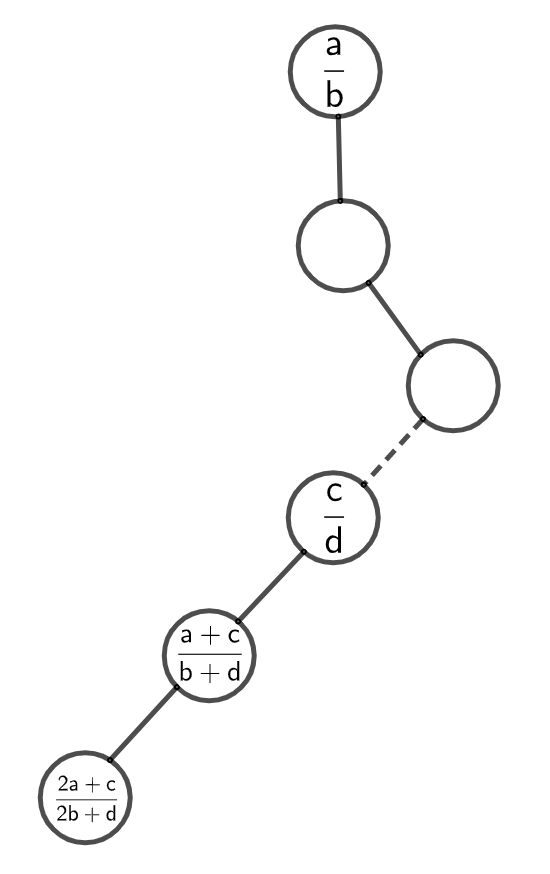}}
  \subfigure[\label{jcj-e04}relationship 4]{\includegraphics[width=0.40\linewidth]{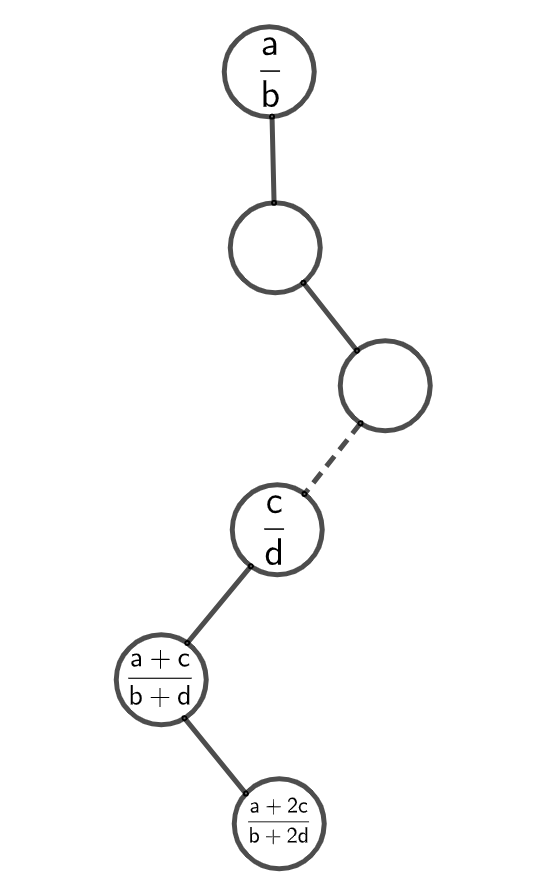}}
  \vspace{1em}
  \caption{\label{jcj-e}The relationship among $\frac{a}{b},\frac{c}{d}$ and $\frac{a+c}{b+d}$}
\end{figure*}

\begin{corollary}\label{coro:e3+}
   All the vertices of the SC-tree( except $\frac{1}{0}$ and $\frac{0}{1}$ ) are reduced rational numbers. 
\end{corollary}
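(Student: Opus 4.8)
The plan is to deduce the corollary directly from the addability criterion in Theorem \ref{thm:adde3+}. The underlying elementary observation is that whenever two vertices $\frac{a}{b}$ and $\frac{c}{d}$ satisfy $bc - ad = \pm 1$, every common divisor of $a$ and $b$ must divide $bc - ad = \pm 1$, so $\gcd(a,b) = 1$; that is, $\frac{a}{b}$ is already in lowest terms. Consequently it suffices to show that every vertex of the SC-tree other than the two pseudo-fractions is addable to at least one genuine vertex, after which the ``$\Rightarrow$'' direction of Theorem \ref{thm:adde3+} finishes the job.

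First I would dispose of the root $\frac{1}{1}$ on its own: it is trivially reduced since $\gcd(1,1) = 1$, and one may also note that $\frac{1}{1} \bigoplus \frac{1}{2} = \frac{2}{3}$ is a vertex of the tree, so $\frac{1}{1}$ is addable to the genuine vertex $\frac{1}{2}$. For an arbitrary vertex $\frac{a}{b}$ at level $n \ge 2$, I would invoke property (i) of Theorem \ref{thm:adde3}. Writing $t$ for the parent of $\frac{a}{b}$, which lies at level $n-1 \ge 1$ and is therefore an ordinary vertex rather than a pseudo-fraction, property (i) gives that the right child of $\frac{a}{b}$ equals $\frac{a}{b} \bigoplus t$. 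Since the SC-tree is a complete binary tree, this right child is itself a vertex of the tree, so by Definition \ref{def:e6} the vertices $\frac{a}{b}$ and $t$ are addable to each other.

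It then remains to combine these observations. Applying Theorem \ref{thm:adde3+} to the addable pair $\frac{a}{b}$ and $t = \frac{c}{d}$ yields $bc - ad = \pm 1$, whence $\gcd(a,b) = 1$ by the divisibility remark above, so $\frac{a}{b}$ is reduced. Together with the root case, this covers every vertex except $\frac{1}{0}$ and $\frac{0}{1}$, which completes the argument.

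I expect the only delicate point to be the bookkeeping near the top of the tree, namely ensuring that the partner vertex $t$ produced for each $\frac{a}{b}$ is a genuine vertex, so that Theorem \ref{thm:adde3+} is being applied to a legitimate pair rather than to one of the excluded pseudo-fractions. This is precisely why I would treat the root $\frac{1}{1}$ separately and restrict the general step to levels $n \ge 2$, where the parent is guaranteed to be an ordinary vertex; no genuinely hard estimate is involved, only care with the boundary of the construction.
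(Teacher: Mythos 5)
Your proposal is correct and takes essentially the same approach as the paper: the paper's proof likewise invokes Theorem \ref{thm:adde3+} to produce $c,d \in N_+$ with $bc-ad=\pm 1$ for any vertex $\frac{a}{b}$ and concludes $(a,b)=1$ from divisibility. The only difference is that you spell out what the paper leaves implicit — exhibiting the addable partner explicitly as the parent $t$ via property ({\romannumeral1}) of Theorem \ref{thm:adde3}, and treating the root $\frac{1}{1}$ separately so the partner is never a pseudo-fraction — which is a tightening of the same argument rather than a different one.
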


\begin{proof}
\ 

   \noindent According to Theorem \ref{thm:adde3+}, if $\frac{a}{b}(a,b \in N_+)$ is a vertex of the SC-tree, then there must be $c,d\in N_+$ such that $bc-ad=\pm 1$. Consequently, $(a,b)=(c,d)=1$, which means $\frac{a}{b}$ is a reduced rational. 
\end{proof}

\begin{theorem}\label{thm:e3+}
The SC-tree corresponds one-to-one to the set $\mathbf{Q_+}$. 
\end{theorem}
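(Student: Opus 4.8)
The plan is to prove Theorem~\ref{thm:e3+} in two parts, mirroring the structure of the proof of Theorem~\ref{thm:e1} for the S-tree: first that every element of $\mathbf{Q_+}$ appears on the SC-tree, and second that no element appears twice. Before either part I would record the basic facts already assembled in the excerpt. By Corollary~\ref{coro:e3+} every vertex (other than the two pseudo-fractions $\frac{1}{0}$ and $\frac{0}{1}$) is a reduced positive rational, so the map from vertices to $\mathbf{Q_+}$ is at least well-defined into the right target. I would also observe the symmetry built into the tree by Theorem~\ref{thm:adde3}(iii): the two subtrees hanging below $\frac{1}{1}$ are exchanged by $\frac{a}{b}\mapsto\frac{b}{a}$, so it suffices to analyze the right subtree, i.e.\ the reduced rationals $\frac{a}{b}$ with $0<a<b$, together with $\frac{1}{1}$ itself; the rationals with $a>b$ then follow by reciprocation.

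For \emph{surjectivity}, I would argue by a minimal-counterexample/descent argument as in Theorem~\ref{thm:e1}. Suppose some reduced $\frac{a}{b}\in\mathbf{Q_+}$ does not occur, and among all such pick one minimizing, say, $a+b$ (or the denominator, then the numerator, exactly as before). The vertex $\frac{1}{1}$ occurs by construction, so we may assume $\frac{a}{b}\neq\frac{1}{1}$; by the reciprocal symmetry assume $a<b$. The key is to exhibit a strictly smaller reduced rational whose child (under the SC-tree growth rule) is $\frac{a}{b}$, forcing that smaller rational to be absent too and contradicting minimality. Here the engine is Theorem~\ref{thm:adde3}: since $a<b$ and $\gcd(a,b)=1$, the Mediants structure lets me write $\frac{a}{b}=\frac{a}{b-a}\bigoplus(\text{parent material})$, and I would use the construction that the parent of $\frac{a}{b}$ is obtained by reversing $\bigoplus$, namely the smaller of $\frac{a}{b-a}$ and $\frac{a-a}{\cdots}$ depending on whether $\frac{a}{b}$ is a left or right child. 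Concretely, I would show that a vertex $\frac{a}{b}$ with $0<a<b$ has a predecessor of smaller total $a+b$ on the tree, so descent terminates only at $\frac{1}{1}$ (or the pseudo-fractions), establishing that every reduced rational is reached.

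For \emph{injectivity}, I would again run a minimal-counterexample argument: if some reduced $\frac{a}{b}$ occurred at two distinct vertices, take one of smallest $a+b$, reduce to the case $a<b$ by symmetry, and note $\frac{a}{b}\neq\frac{1}{1}$ since $\frac{1}{1}$ has a unique, visibly determined position (it is the designated common child of $\frac{1}{0}$ and $\frac{0}{1}$ and lies at the root of level~$1$). Two distinct occurrences of $\frac{a}{b}$ would have two parents, and applying the inverse of the growth rule to each produces two occurrences of a strictly smaller reduced rational, contradicting minimality. The cleanest way to see that a parent is \emph{uniquely recoverable} from a child is to invoke Theorem~\ref{thm:adde3+}: the defining equation $bc-ad=\pm1$ together with the ordering constraint $0<c<a,\ 0<d<b$ pins down the pair generating $\frac{a}{b}$ (this is exactly the uniqueness extracted in the ``$\Leftarrow$'' direction of that proof, where $c=c_0,d=d_0$ and $c=a-c_0,d=b-d_0$ are shown to be the only solutions), so the position of a vertex determines its immediate ancestry without ambiguity.

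I expect the main obstacle to be the careful bookkeeping of the correspondence between SC-tree vertices and S-tree vertices induced by Definition~\ref{def:e3}, rather than the descent itself. Because the SC-tree is \emph{defined} by collapsing the pairs of S-tree vertices whose denominators share the form $ma+nb$ and then attaching the pseudo-fractions at level~$0$, I must check that the growth rule ``child $=\bigoplus$ of appropriate ancestors'' used informally above is precisely the operation inherited from the S-tree under this collapse, and that the exceptional top of the tree ($\frac{1}{0},\frac{0}{1},\frac{1}{1}$) is handled consistently so that the descent never stalls or falls off the tree. Once that dictionary is nailed down, both surjectivity and injectivity follow from the same minimal-criminal template already validated for Theorem~\ref{thm:e1}, now powered by Theorem~\ref{thm:adde3} and the uniqueness half of Theorem~\ref{thm:adde3+}.
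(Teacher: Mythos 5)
Your symmetry reduction (via the third property of Theorem \ref{thm:adde3}) and your injectivity half essentially reproduce the paper's argument: a minimal counterexample ordered by denominator, then numerator, combined with the fact that $bc-ad=\pm1$, $0<c<a$, $0<d<b$ admits exactly one pair of solutions $\frac{c_1}{d_1},\frac{c_2}{d_2}$ with $c_1+c_2=a$, $d_1+d_2=b$, so that a repeated $\frac{a}{b}$ forces a repetition of a fraction with smaller denominator. That part is sound and is the same route the paper takes.

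The surjectivity half, however, has a genuine gap. First, the decomposition you write, $\frac{a}{b}=\frac{a}{b-a}\bigoplus(\text{parent material})$, is arithmetically wrong: it forces the second summand to be $\frac{0}{a}$, and your alternative candidate ``$\frac{a-a}{\cdots}$'' has numerator $0$. You are pattern-matching the S-tree descent of Theorem \ref{thm:e1}, where the would-be parent of $\frac{u}{v}$ is $\frac{u}{v-u}$ or $\frac{v-u}{u}$, but SC-tree ancestry is not given by that subtraction. Second, and more fundamentally, the inference ``the would-be parent must be absent, else $\frac{a}{b}$ would occur as its child'' is not available on the SC-tree: by Theorem \ref{thm:adde3}, the children of a vertex $s$ are mediants of $s$ with $s$'s own parent and grandparent, so they depend on the \emph{position} of $s$ in the tree, not on its value alone. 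Knowing that the value $\frac{c_1}{d_1}$ occurs somewhere therefore does not let you conclude that $\frac{a}{b}$ hangs below that occurrence, and a one-step descent through a single parent cannot get off the ground before injectivity is known. The paper's proof is shaped exactly to avoid this: it shows that \emph{both} Farey components $\frac{c_1}{d_1}$ and $\frac{c_2}{d_2}$ (with $c_1b-d_1a=1$, $c_2b-d_2a=-1$, $c_1+c_2=a$, $d_1+d_2=b$) occur on the tree, since both denominators are smaller than $b$, computes $c_1d_2-d_1c_2=1$, and then invokes the ``$\Leftarrow$'' direction of Theorem \ref{thm:adde3+} — any two vertices whose values satisfy the determinant condition are addable, wherever they sit in the tree — to conclude that their mediant $\frac{a}{b}$ is itself a vertex, the desired contradiction. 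That direction of Theorem \ref{thm:adde3+} is precisely the ingredient your surjectivity argument needs and never deploys (you cite the theorem only for the uniqueness statement in the injectivity half); without it your descent stalls at the step that converts ``a smaller fraction occurs'' into ``$\frac{a}{b}$ occurs.''
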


\begin{proof}
\ 

\noindent Accordingly, we only need to consider the right subtree of the SC-tree rooted at $\frac{1}{1}$. 
\begin{itemize}
    \item Every positive rational number is represented by some vertices of the SC-tree. 
\end{itemize}
$\frac{1}{1}$ certainly occurs. Otherwise, let $\frac{a}{b} \in (0,1),(a,b)=1$ be, among all fractions that do not occur, one with the smallest denominator, and among the one with the smallest numerator. Since $(a,b)=1$, there must be integers $0<c_1<a,0<d_1<b$ that makes $c_1b-d_1a=1$, and integers $0<c_2<a,0<d_2<b$ that makes $c_2b-d_2a=-1$, specifically, $c_1+c_2=a,d_1+d_2=b$. Due to the properties of $\frac{a}{b}$ we selected, reduced fractions $\frac{c_1}{d_1}$ and $\frac{c_2}{d_2}$ must be on the SC-tree. Notably, $c_1d_2-d_1c_2=c_1(b-d_1)-d_1(a-c_1)=c_1b-d_1a=1$. As a result, $\frac{c_1+c_2}{d_1+d_2}=\frac{a}{b}$ is a vertex of the SC-tree by Corollary\ref{coro:e3+}, a contradiction. 
\begin{itemize}
    \item No positive rational number occurs at more than one vertex of the SC-tree.
\end{itemize}
$\frac{1}{1}$ certainly occurs once of the tree. Otherwise, let $\frac{a}{b} \in (0,1),(a,b)=1$ be, among all fractions that occur more than once, one with the smallest denominator, and among the one with the smallest numerator. Since $(a,b)=1$, there must be only one pair of integers $0<c_1<a,0<d_1<b$ that makes $c_1b-d_1a=1$, and exactly one pair of integers $0<c_2<a,0<d_2<b$ that makes $c_2b-d_2a=-1$, specifically, $c_1+c_2=a,d_1+d_2=b$. Notice that $c_1d_2-d_1c_2=c_1(b-d_1)-d_1(a-c_1)=c_1b-d_1a=1$. As a result, $\frac{c_1+c_2}{d_1+d_2}$ is the only way to approach $\frac{a}{b}$, which means $\frac{c_1}{d_1}$ and $\frac{c_2}{d_2}$ also occur more than once on the SC-tree. This contradicts the feature of $\frac{a}{b}$ we selected. \newline
So every positive rational appears exactly once. The result follows.  

\end{proof}

In this subsection, we introduced the SC-tree, expanding the range of corresponding rationals beyond the limits of the S-tree, which is limited to the interval $(0,1]$. We begun by examining the origins of the denominators in the S-tree and introduced the SC-tree. Then, we explored the operation of Mediants to outline the properties of the SC-tree and illustrated its growth. Additionally, we clarified the conditions under which two vertices can generate a new reduced fraction using the Mediants operation, as shown in Theorem \ref{thm:adde3+}. Subsequently, we leveraged insights from number theory to demonstrate that every fraction on the SC-tree is reduced, seen by Corollary \ref{coro:e3+}. With these properties in place, we can ultimately conclude that all vertices of the SC-tree can be one-to-one corresponded with rationals from the set $\mathbf{Q_+}$, as illustrated by Theorem \ref{thm:e3+}.
\subsection{Locating vertices of the SC-tree}
\ 

Similar to exploring the S-tree, this subsection mainly focuses on the algorithm for locating vertices of the SC-tree. To simplify the discussion, we return to continued fractions and the 0-1 sequence. The third property in Theorem \ref{thm:adde3} connects the two reciprocal rationals, allowing us to accomplish the locating task by tracking only the locations of rationals not greater than 1.

\begin{theorem}\label{thm:adde4}

For the vertex $k$ in level $n(n\ge 3)$ of the SC-tree, if $k$ is assumed to be the right child of $s$, then we have: 

    \begin{itemize}
        \item {\romannumeral1}\ \  If $s=[0,a_1,a_2,\cdots,a_{m-1}], a_i\ge 1, i = 1, 2, \cdots, m-2, a_{m-1}\ge 2 $, then 
        
        $k=[0,a_1,a_2,\cdots,a_{m-1}-1,2]$.
        
        \item {\romannumeral2}\ \  If $s$ is the left child of $t=[0,a_1,a_2,\cdots,a_{m-1}], a_i\ge 1, i = 1, 2, \cdots, m-2, a_{m-1}\ge 2 $, then
        
        $k=[0,a_1,a_2,\cdots,a_{m-1},2]$. 
        
        Else,if $s$ is the right child of $t=[0,a_1,a_2,\cdots,a_{m-1}], a_i\ge 1, i = 1, 2, \cdots, m-2, a_{m-1}\ge 2 $, then
        
        $k=[0,a_1,a_2,\cdots,a_{m-1}-1,1,2]$.

        \item {\romannumeral3}\ \  If $k=[0,a_1,a_2,\cdots,a_{m-1},a_m], a_i\ge 1, i = 1, 2, \cdots, m-2, a_{m}\ge 2 $, then
        
        $L_{k}=\{k_0=k,k_1,\cdots,k_j,\cdots\}$ satisfies $k_j=[0,a_1,a_2,\cdots,a_{m-1},a_m+j](j\ge 0)$. 
     
    \end{itemize}
\end{theorem}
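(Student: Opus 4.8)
The plan is to convert the additive (mediant) recurrences of Theorem~\ref{thm:adde3} into statements about continued fractions through the classical convergent recurrence. Writing the convergents of $[0,a_1,\ldots,a_m]$ as $\frac{P_i}{Q_i}=[0,a_1,\ldots,a_i]$, so that $P_i=a_iP_{i-1}+P_{i-2}$ and $Q_i=a_iQ_{i-1}+Q_{i-2}$, the one-line computation
\[
[0,a_1,\ldots,a_m]\bigoplus[0,a_1,\ldots,a_{m-1}]=\frac{P_m+P_{m-1}}{Q_m+Q_{m-1}}=[0,a_1,\ldots,a_{m-1},a_m+1]
\]
is the engine of the argument: taking the mediant of a vertex with the convergent obtained by deleting its last partial quotient increments that quotient. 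I would record this identity first, together with the companion one-line computation that evaluates the mediant of a vertex with its tree-parent (where the parent has one of the two shapes $[0,\ldots,c+1]$ and $[0,\ldots,c-1,2]$), since every verification below is an instance of one of these.

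The key reduction is that all three assertions follow from two elementary child rules: the left child of $[0,a_1,\ldots,a_m]$ is $[0,a_1,\ldots,a_m+1]$, and its right child is $[0,a_1,\ldots,a_m-1,2]$. Granting these, (i) is exactly the right-child rule. Assertion (ii) is the rules applied twice: if $s$ is the left child of $t=[0,a_1,\ldots,a_{m-1}]$ then $s=[0,\ldots,a_{m-1}+1]$, whose right child is $[0,\ldots,a_{m-1},2]$; if $s$ is the right child of $t$ then $s=[0,\ldots,a_{m-1}-1,2]$, whose right child is $[0,\ldots,a_{m-1}-1,1,2]$. Assertion (iii) describes a left branch: placing $k$ on the left branch of the right-child vertex heading it, Theorem~\ref{thm:adde3}(ii) gives $k_j=k_{j-1}\bigoplus t$ with $t$ a fixed ancestor, and once $t$ is identified with the convergent $[0,a_1,\ldots,a_{m-1}]$ of $k$, iterating the displayed identity yields $[0,a_1,\ldots,a_m+j]$.

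To establish the two child rules I would induct on the level $n$. After verifying the continued fractions of the finitely many vertices at levels up to $3$ by hand, the inductive step treats a level-$(n+1)$ vertex $k$ according to its type. If $k$ is the right child of $s$, then Theorem~\ref{thm:adde3}(i) gives $k=s\bigoplus t$ with $t$ the parent of $s$; the inductive hypothesis supplies the continued fractions of $s$ (level $n$) and $t$ (level $n-1$), and the companion computation confirms that $k=[0,\ldots,c-1,2]$, where $c$ denotes the last partial quotient of $s$. If $k$ is a left child, I would place it on the left branch $L_{k'}$ headed by the right-child vertex $k'$ and invoke Theorem~\ref{thm:adde3}(ii): every vertex on that branch adds the same ancestor $t'$, which the inductive hypothesis identifies as the convergent $[0,\ldots,a_{m-1}]$ of $s$, so the displayed identity gives $k=[0,\ldots,a_m+1]$. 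To keep this self-contained I would carry, as part of the inductive hypothesis, the continued-fraction form of each vertex's parent and of the grandparent of each branch-heading right child.

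I expect the main obstacle to be this ancestor bookkeeping combined with the non-uniqueness of continued fraction expansions. Deciding whether a vertex $[0,\ldots,a_m]$ is a left or a right child—and hence which ancestor Theorem~\ref{thm:adde3} tells us to add—forces a split on whether the last quotient is $2$ or is at least $3$, and confirming that the grandparent of a right child is $[0,\ldots,a_{m-1}]$ requires the rewriting $[\ldots,c-1,1]=[\ldots,c]$ whenever a trailing $1$ is produced (as happens when $a_{m-1}=1$). Keeping these degenerate cases consistent, so that the canonical form with final quotient at least $2$ is preserved throughout the induction, is the delicate point; once it is handled, each individual check is a single application of the convergent recurrence.
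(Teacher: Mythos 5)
Your proposal is correct, and it shares the paper's skeleton---an induction on the level that uses Theorem~\ref{thm:adde3} to express each new vertex as a mediant of already-known ones, together with the normalization $[\ldots,c]=[\ldots,c-1,1]$---but your decomposition is genuinely different and cleaner. The paper's inductive step fixes the great-grandparent $t^*=[0,a_1,\ldots,a_{m-1}]$ and splits into the four arrangements of $s,t,t^*$ (its Figure~\ref{e-tsk}), re-verifying all three assertions, including an inner induction along $L_k$, separately in each case; moreover the continued-fraction values of the mediants there (e.g.\ $k=s\bigoplus t=t^*\bigoplus 2t$) are asserted rather than derived. You instead isolate the two child rules (left child increments the last partial quotient; right child replaces $a_m$ by $a_m-1,2$) as the one lemma to be proved by induction, reduce all three assertions of the theorem to them in a few lines, and ground every mediant evaluation in the classical convergent recurrence via the identity $[0,a_1,\ldots,a_m]\bigoplus[0,a_1,\ldots,a_{m-1}]=[0,a_1,\ldots,a_{m-1},a_m+1]$. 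What your route buys: the inductive step collapses to a two-way split (last quotient equal to $2$ versus at least $3$), and the step the paper hand-waves---how a mediant acts on a continued fraction---becomes a one-line consequence of $P_m=a_mP_{m-1}+P_{m-2}$, $Q_m=a_mQ_{m-1}+Q_{m-2}$. What the paper's four-case layout buys is only that its case list mirrors the (i)/(ii) dichotomy of the statement, so the conclusions can be read off its pictures. The delicate points you flag---the $2$-versus-$\ge 3$ test for deciding left/right childhood, trailing-$1$ normalization, and the identification of the fixed addend of a left branch with the deleted-last-quotient convergent of its head (which is exactly what makes Theorem~\ref{thm:adde3}(ii) produce the increments in assertion (iii))---are the right ones, and your treatment of them is sound.
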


\begin{proof}
\ 

    \noindent Since the left subtree of the SC-tree rooted at $\frac{2}{1}$ consists of the rationals above 1, the right subtree of the SC-tree with $\frac{1}{2}$ as the root should only be considered.\newline
    After checking all vertices in Figure \ref{image:Cab-tree-en}, we can conclude that the theorem is true for level 4 and above of the SC-tree.\newline
    Now, let's assume the theorem is true for all levels $3, 4, \cdots , g$.\newline
    Consider a vertex $k$ in level $g+1$, where it is the right child of $s$, and $s$ is a child of $t$, while $t$ is a child of $t^*=[0,a_1,a_2,\cdots,a_{m-1}]$. Now, let's examine the continued fraction of $k$ in the context of the four possible arrangements of $s,t,t^*$.
     
\begin{figure}[htbp]
  \centering
  \subfigure[\label{e-tsk01}Case 1]{\includegraphics[width=0.47\linewidth]{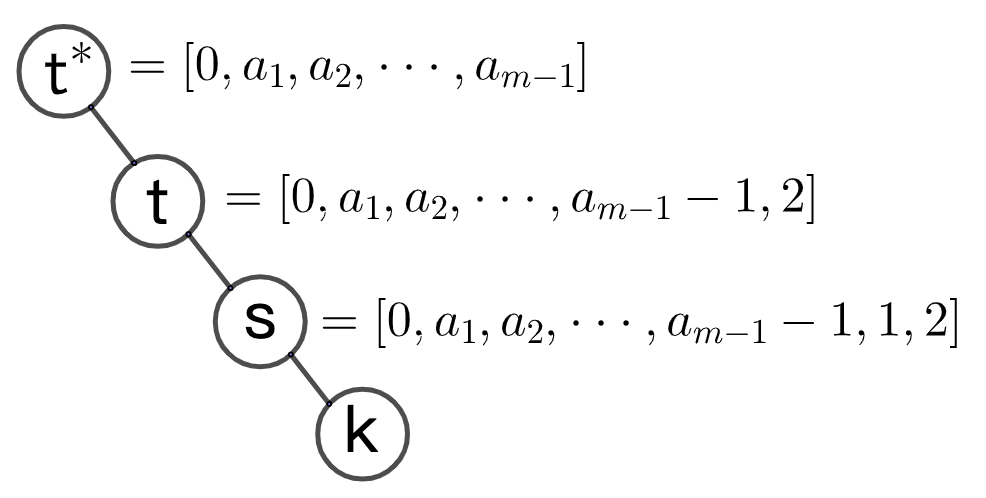}}
  \subfigure[\label{e-tsk02}Case 2]{\includegraphics[width=0.41\linewidth]{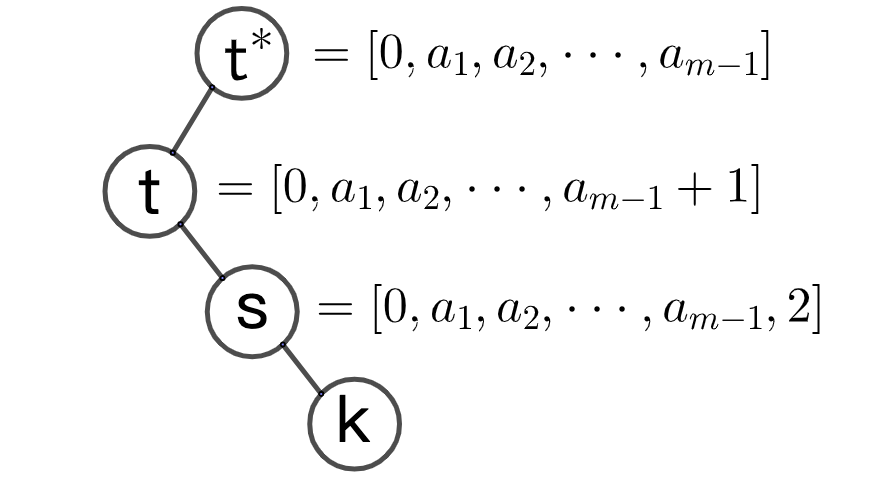}}
  \subfigure[\label{e-tsk03}Case 3]{\includegraphics[width=0.41\linewidth]{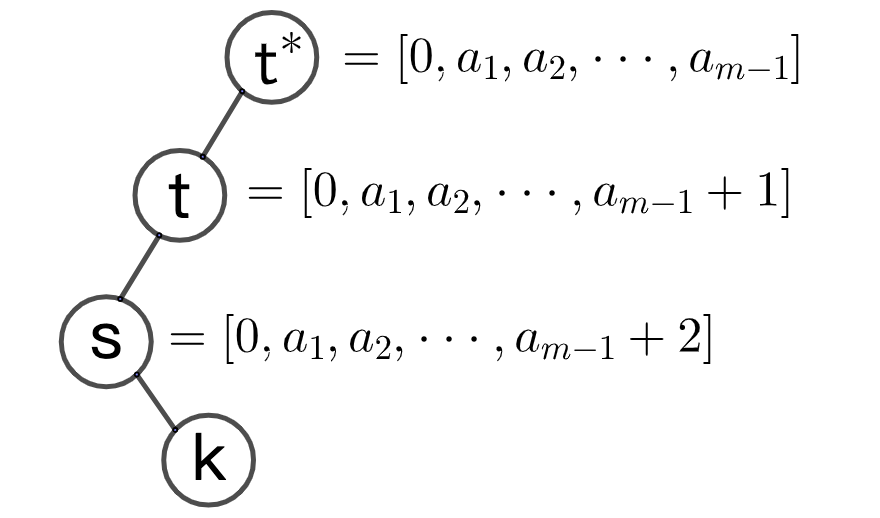}}
  \subfigure[\label{e-tsk04}Case 4]{\includegraphics[width=0.41\linewidth]{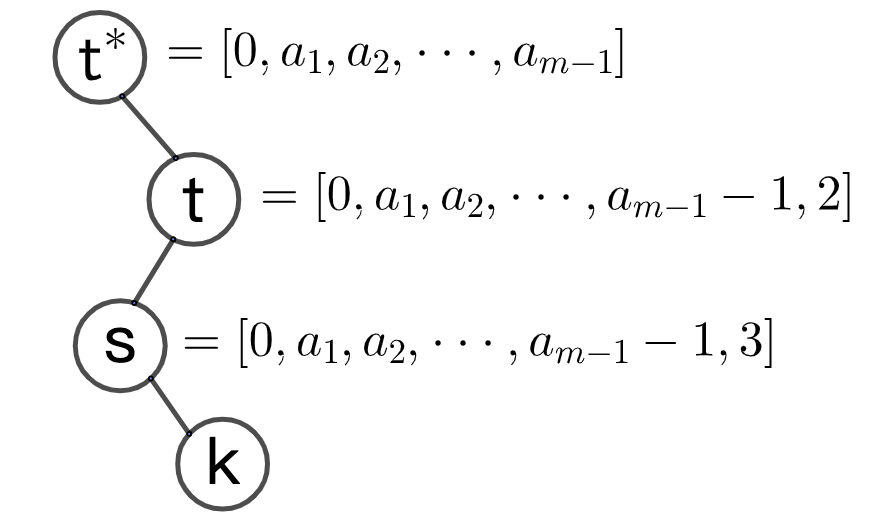}}
  \vspace{1em}
  \caption{\label{e-tsk}Four kinds of arrangement of $s,t,t^*$}
\end{figure}

    \noindent \textbf{\textit{Case} 1}: As shown in Figure \ref{e-tsk01}.\newline    In this case, where $t,s,k$ all belong to $R_{t^*}$, we have $t=[0,a_1,a_2,\cdots,a_{m-1}-1,a_m=2],s=[0,a_1,a_2,\cdots,a_{m-1}-1,1,2]$. From Theorem \ref{thm:adde3}, we know that $k=s\bigoplus t=(t^*\bigoplus t)\bigoplus t=t^*\bigoplus 2t$. Changing the expressions of $t^*$ and $t$ to $t^*=[0,a_1,a_2,\cdots,a_{m-1}-1,1]$ and $t=[0,a_1,a_2,\cdots,a_{m-1}-1,1,1]$, and we find that $k=[0,a_1,a_2,\cdots,a_{m-1}-1,1,1,2]$. Comparing $k$ with $t,s$, we can see the first and the second properties are satisfied. Now, let's explore $L_{k}=\{k_0=k,k_1,\cdots,k_j,\cdots\}(j\ge 0)$. The third property is true when $j=0$. Suppose it holds for all $0 \le j \le l$, $k_j=[0,a_1,a_2,\cdots,a_{m-1}-1,1,1,2+j]$. According to Theorem \ref{thm:adde3}, $k_{l+1}=k_{l}\bigoplus t$, hence $k_{l+1}=[0,a_1,a_2,\cdots,a_{m-1}-1,1,1,2+l,1]=[0,a_1,a_2,\cdots,a_{m-1}-1,1,1,2+(l+1)]$. Therefore,  for $L_{k}$, $k_j=[0,a_1,a_2,\cdots,a_{m-1}-1,1,1,2+j]$ for all $j\ge 0$. \newline
   \textbf{ \textit{Case} 2}: As shown in Figure \ref{e-tsk02}.\newline
    In this case, $t=[0,a_1,a_2,\cdots,a_{m-1}+1],s=[0,a_1,a_2,\cdots,a_{m-1},2]$. From Theorem \ref{thm:adde3}, we know that $k=s\bigoplus t=(t^*\bigoplus t)\bigoplus t=t^*\bigoplus 2t$. Changing the expression of $t$ to $t=[0,a_1,a_2,\cdots,a_{m-1},1]$, we have $k=[0,a_1,a_2,\cdots,a_{m-1},1,2]$. Comparing $k$ with $t,s$, we can see the first and the second properties are satisfied. Now, let's explore $L_{k}=\{k_0=k,k_1,\cdots,k_j,\cdots\}(j\ge 0)$. The third property is true when $j=0$. Suppose it holds that $k_j=[0,a_1,a_2,\cdots,a_{m-1},1,2+j]$ for all $0 \le j \le l$. According to Theorem \ref{thm:adde3}, $k_{l+1}=k_{l}\bigoplus t$, hence $k_{l+1}=[0,a_1,a_2,\cdots,a_{m-1},1,2+l,1]=[0,a_1,a_2,\cdots,a_{m-1},1,2+(l+1)]$. Therefore, for $L_{k}$, $k_j=[0,a_1,a_2,\cdots,a_{m-1},1,2+j]$ for all $j\ge 0$. \newline
    \textbf{\textit{Case} 3}: As shown in Figure \ref{e-tsk03}.\newline
    In this case, $t=[0,a_1,a_2,\cdots,a_{m-1}+1],s=[0,a_1,a_2,\cdots,a_{m-1}+2]$. From Theorem \ref{thm:adde3}, we know that $k=s\bigoplus t$. Changing the expression of $s$ to $s=[0,a_1,a_2,\cdots,a_{m-1}+1,1]$, we find that $k=[0,a_1,a_2,\cdots,a_{m-1}+1,1,1]=[0,a_1,a_2,\cdots,a_{m-1}+1,2]$. Comparing $k$ with $t,s$, we can see the first and the second properties are satisfied. Now, let's explore $L_{k}=\{k_0=k,k_1,\cdots,k_j,\cdots\}$, where$j\ge 0$. The third property is true when $j=0$. Suppose it holds for all $0 \le j \le l$, $k_j=[0,a_1,a_2,\cdots,a_{m-1}+1,2+j]$. According to Theorem \ref{thm:adde3}, $k_{l+1}=k_{l}\bigoplus t$, hence $k_{l+1}=[0,a_1,a_2,\cdots,a_{m-1}+1,2+l,1]=[0,a_1,a_2,\cdots,a_{m-1}+1,2+(l+1)]$. Therefore, for $L_{k}$, $k_j=[0,a_1,a_2,\cdots,a_{m-1}-1,2,2+j]$ for all $j\ge 0$. \newline
    \textbf{\textit{Case} 4}: As shown in Figure \ref{e-tsk04}.\newline
    In this case, $t=[0,a_1,a_2,\cdots,a_{m-1}-1,2],s=[0,a_1,a_2,\cdots,a_{m-1}-1,3]$. From Theorem \ref{thm:adde3}, we know that $k=s\bigoplus t$. Changing the expression of $s$ into $s=[0,a_1,a_2,\cdots,a_{m-1}-1,2,1]$, we find that $k=[0,a_1,a_2,\cdots,a_{m-1}-1,2,1,1]=[0,a_1,a_2,\cdots,a_{m-1}-1,2,2]$. Comparing $k$ with $t,s$, we can see the first and the second properties are satisfied. Now, let's explore $L_{k}=\{k_0=k,k_1,\cdots,k_j,\cdots\}(j\ge 0)$. The third property is true when $j=0$. Suppose it holds for all $0 \le j \le l$, $k_j=[0,a_1,a_2,\cdots,a_{m-1}-1,2,2+j]$. According to Theorem \ref{thm:adde3}, $k_{l+1}=k_{l}\bigoplus t$, hence $k_{l+1}=[0,a_1,a_2,\cdots,a_{m-1}-1,2,2+l,1]=[0,a_1,a_2,\cdots,a_{m-1}-1,2,2+(l+1)]$. Therefore, for $L_{k}$, $k_j=[0,a_1,a_2,\cdots,a_{m-1}-1,2,2+j]$ for all $j\ge 0$. \newline   
    The result follows.
    
\end{proof}

\begin{theorem}\label{thm:adde5}
For a rational number $t\in (0,1)$ represented as $t=[0,a_1,a_2,\cdots,a_k]$ where $a_i\ge 1,a_k\ge 2$, its 0-1 sequence of the SC-tree is $$\overline{1\underbrace{0\cdots0}_{a_1-1}1\underbrace{0\cdots0}_{a_{2}-1}1\underbrace{0\cdots0}_{a_{k-1}-1}\cdots 1\underbrace{0\cdots0}_{a_{k}-2}}.$$
\end{theorem}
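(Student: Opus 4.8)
The plan is to prove the statement by induction, reading the $0$-$1$ sequence as a path instruction (left to right, with each $1$ a right move and each $0$ a left move, as in Definition \ref{def:e2}) and translating each move into its effect on the continued fraction via Theorem \ref{thm:adde4}. Two facts do all the work. First, appending a $0$ (a left move) increments the last partial quotient, since by part (iii) the left branch of $[0,b_1,\dots,b_p]$ with $b_p\ge 2$ consists of the vertices $[0,b_1,\dots,b_p+j]$. Second, appending a $1$ (a right move) to a vertex $[0,b_1,\dots,b_p]$ with $b_p\ge 2$ produces its right child $[0,b_1,\dots,b_p-1,2]$ by part (i). I would begin by fixing the starting data: the unique length-$1$ sequence $\overline{1}$ labels the right vertex of level $2$, namely $\frac{1}{2}=[0,2]$, so that the whole family of sequences in the theorem lives in the subtree where these two operations are valid.

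The key device is to prove a slightly more general statement carrying a free trailing parameter, because the literal formula treats the last block (with $a_k-2$ zeros) differently from the inner blocks (with $a_i-1$ zeros), and this asymmetry blocks a naive induction. I would therefore show: for every $k\ge 1$, all $a_1,\dots,a_{k-1}\ge 1$, and every $c\ge 0$, the sequence $\overline{1\,0^{a_1-1}\,1\,0^{a_2-1}\cdots 1\,0^{a_{k-1}-1}\,1\,0^{\,c}}$ labels the vertex $[0,a_1,\dots,a_{k-1},c+2]$. Specializing $c=a_k-2$ (legitimate since $a_k\ge 2$) then recovers exactly the claimed correspondence.

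For the induction on $k$, the base case $k=1$ is the sequence $\overline{1\,0^{\,c}}$, which starts at $\frac{1}{2}=[0,2]$ and then applies $c$ left moves, yielding $[0,2+c]$ by part (iii); at the boundary the left child of $\frac{1}{2}$ is $\frac{1}{2}\bigoplus\frac{0}{1}=\frac{1}{3}$ (Theorem \ref{thm:adde3}), so the branch is $\frac{1}{2},\frac{1}{3},\frac{1}{4},\dots$, confirming the formula even though $\frac{1}{2}$ sits at level $2$. For the step, the induction hypothesis applied to the prefix $\overline{1\,0^{a_1-1}\cdots 1\,0^{a_{k-1}-1}\,1\,0^{\,a_k-1}}$ (a $k$-block sequence with trailing length $a_k-1$) lands on $[0,a_1,\dots,a_{k-1},a_k+1]$; the next symbol $1$ is a right move, and since the last partial quotient $a_k+1\ge 2$, part (i) sends this vertex to $[0,a_1,\dots,a_{k-1},a_k,2]$; finally the remaining $c$ zeros are left moves that raise the last quotient to $c+2$ by part (iii), giving $[0,a_1,\dots,a_k,c+2]$, as required.

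The main obstacle I anticipate is bookkeeping rather than depth: I must keep every vertex in canonical continued-fraction form (last partial quotient $\ge 2$) so that the hypotheses of parts (i) and (iii) of Theorem \ref{thm:adde4} are continuously met, and I must be careful that each right move is applied to the canonical representative and not to the alternative form $[\dots,b-1,1]$, which is instead governed by part (ii). Verifying the base level also requires a small remark about the pseudo-fraction boundary $\frac{0}{1}$, so that part (iii) applies legitimately to the branch emanating from $\frac{1}{2}$ at level $2$.
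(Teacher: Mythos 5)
Your proposal is correct and takes essentially the same route as the paper's own proof: both anchor the induction at $\frac{1}{2}=[0,2]$ with sequence $\overline{1}$ and use parts (i) and (iii) of Theorem \ref{thm:adde4} to translate each right move into appending a final partial quotient $2$ (after decrementing) and each run of left moves into incrementing the last partial quotient along a left branch. Your explicit strengthened induction with the free trailing parameter $c$, and your remark on the boundary at level $2$, simply formalize the paper's path-tracing argument.
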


\begin{proof}
\ 

\noindent  Since all rationals within $(0,1)$ belong to the subtree rooted at $\frac{1}{2}$, and $\frac{1}{2}=[0,2]$ is in level 2, corresponding to the 0-1 sequence $\overline{1}$. According to Theorem \ref{thm:adde4}, for the right child $m$ of a given vertex, the final digit of its continued fraction must be 2, and all vertices in $L_{m}=\{m_0=m,m_1,m_2,\cdots ,m_i,\cdots \}$ share the same continued fraction length with $m$, with the final digit increasing by 1 as the only difference. Taking the right child of $m_i=[0,a_1,a_2,\cdots ,2+i](a_k>2)$ results in $[0,a_1,a_2,\cdots ,1+i,2]$, which is a longer continued fraction. Coincidentally, $1+i$ is exactly the number of vertices among $m_0=m,m_1,m_2,\cdots ,m_i$. Thus, for $t=[0,a_1,a_2,\cdots,a_k]$, the digit $a_i(1\le i\le k-1)$ represents the number of the vertices shared by the path from $\frac{1}{2}$ to $t$ and the left branch of $[0,a_1,a_2,\cdots,a_{i-1},2]$. While 1 should be subtracted when the process goes to $a_k$. Accordingly, when $t=[0,a_1,a_2,\cdots,a_k]$, its 0-1 sequence of the SC-tree should be $\overline{1\underbrace{0\cdots0}_{a_1-1}1\underbrace{0\cdots0}_{a_{2}-1}1\underbrace{0\cdots0}_{a_{k-1}-1}\cdots 1\underbrace{0\cdots0}_{a_{k}-2}}$. 
    
\end{proof}

\begin{corollary}\label{coro:adde2}
 For a rational number $t\in (1,+\infty)$ represented as  $t=[a_1,a_2,\cdots,a_k]$ where $a_i\ge 1,a_k\ge 2$,its 0-1 sequence of the SC-tree is $$\overline{0\underbrace{0\cdots0}_{a_1-1}1\underbrace{0\cdots0}_{a_{2}-1}1\underbrace{0\cdots0}_{a_{k-1}-1}\cdots 1\underbrace{0\cdots0}_{a_{k}-2}}.$$
\end{corollary}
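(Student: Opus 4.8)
The plan is to deduce this corollary from Theorem \ref{thm:adde5} via the reciprocal symmetry of the SC-tree, rather than re-running the four-case induction behind Theorem \ref{thm:adde4}. The starting observation is the standard continued-fraction identity: if $t=[a_1,a_2,\cdots,a_k]\in(1,+\infty)$ with $a_i\ge 1,a_k\ge 2$, then $\frac{1}{t}=[0,a_1,a_2,\cdots,a_k]\in(0,1)$. Thus $\frac{1}{t}$ is exactly the kind of vertex covered by Theorem \ref{thm:adde5}, whose conclusion gives the 0-1 sequence of $\frac{1}{t}$ as $\overline{1\underbrace{0\cdots0}_{a_1-1}1\underbrace{0\cdots0}_{a_2-1}\cdots 1\underbrace{0\cdots0}_{a_k-2}}$. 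It then remains only to transport this sequence across the reciprocal map.

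First I would record how the serial number of a vertex within its level (the quantity governed by the third property of Theorem \ref{thm:adde3}) relates to its 0-1 sequence. By Definition \ref{def:e2}, the $w$-th vertex of level $n$ carries the 0-1 sequence equal to the binary expansion of $w-1$, left-padded with zeros to length $n-1$; equivalently, its leading bit is $0$ precisely when $w\le 2^{n-2}$ (the first half of the level, the subtree under $\frac{2}{1}$, consisting of values above $1$) and $1$ when $w>2^{n-2}$ (the second half, the subtree under $\frac{1}{2}$, consisting of values below $1$).

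Next I would apply the third property of Theorem \ref{thm:adde3} with $k=t$: since $t\in(1,+\infty)$ sits in the first half of its level, say as the $w$-th vertex of level $n$ with $1\le w\le 2^{n-2}$, its reciprocal $\frac{1}{t}$ is the $(2^{n-2}+w)$-th vertex of the same level $n$. Passing to binary expansions of length $n-1$, the labels $w-1$ and $2^{n-2}+(w-1)$ differ in exactly one place, the leading bit, which is $0$ for $t$ and $1$ for $\frac{1}{t}$, while all lower-order bits coincide. Hence the 0-1 sequence of $t$ is obtained from that of $\frac{1}{t}$ simply by replacing the leading $1$ with a $0$. Combining this with the sequence of $\frac{1}{t}$ furnished by Theorem \ref{thm:adde5} yields exactly $\overline{0\underbrace{0\cdots0}_{a_1-1}1\underbrace{0\cdots0}_{a_2-1}\cdots 1\underbrace{0\cdots0}_{a_k-2}}$, as claimed.

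The routine parts, namely the reciprocal continued-fraction identity and the bookkeeping of block lengths, are immediate. The one step that deserves care, and which I expect to be the main obstacle, is the bridge in the second paragraph: making precise that the ``serial number within a level'' and the ``0-1 sequence'' are two encodings of the same datum (the binary numeral of $w-1$), so that the additive shift by $2^{n-2}$ in Theorem \ref{thm:adde3} is exactly a toggle of the leading bit. Once that dictionary is fixed, the reciprocal map acts transparently on sequences and the corollary follows; the same device also certifies that $t$ and $\frac{1}{t}$ occupy a common level, which is implicitly needed for the leading-bit comparison to be meaningful.
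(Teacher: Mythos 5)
Your proposal is correct and follows essentially the same route as the paper's own (very terse) proof: the paper likewise derives the corollary by combining the reciprocal symmetry in the third property of Theorem \ref{thm:adde3} with the sequence formula of Theorem \ref{thm:adde5}. Your write-up merely makes explicit the dictionary between serial numbers and 0-1 sequences (so that the shift by $2^{n-2}$ is a toggle of the leading bit), which is exactly the detail the paper leaves implicit.
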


\begin{proof}
\ 

\noindent The symmetry of the SC-tree is evident from the third property in Theorem \ref{thm:adde3}, and this result can be concluded by utilizing Theorem \ref{thm:adde5} mentioned above. 
\end{proof}

\begin{example}\label{examadde04}
Since $\frac{7}{16}=[0,2,3,2],\frac{21}{11}=[2,1,1,1,3]$, by Theorem \ref{thm:adde5} and Corollary \ref{coro:adde2}, they corespond the 0-1 sequences 101001 and 0011110 respectively. Hence, $\frac{7}{16}$ is the $2^5+2^3+2^0+1=42$nd vertex in level 7 of the SC-tree, while $\frac{21}{11}$ is the $2^4+2^3+2^2+2^1+1=31$st vertex in level 8. 
\end{example}

In this subsection, we accomplished the locating task for all vertices on the SC-tree, as shown by Theorem \ref{thm:adde5} and Corollary \ref{coro:adde2} .  These algorithms naturally reveal the relationship between a vertex and its predecessors through continued fraction expansion.

In this section, we constructed the S-tree and the SC-tree, analyzing their properties. Theorem \ref{thm:e1} and \ref{thm:e3+} demonstrate how the the S-tree and the SC-tree offer an intuitive way to establish that rationals are countable.  We also investigated the situations when two vertices are addable, as shown by Theorem \ref{thm:adde3+}, to illustrate that the SC-tree corresponds one-to-one to the reduced rationals in $Q_+$. Additionally, Theorem \ref{thm:en2} and Corollary \ref{coro:adde2} utilize continued fraction expansion and 0-1 sequences to determine the locations of the vertices of both the S-tree and the SC-tree. 


\section{The linkages of the four trees}
\ 

In the previous section, we introduced the S-tree and SC-tree, along with some properties inferred from their definitions. Theorem \ref{thm:en2} and Corollary \ref{coro:adde2} reveal the construction patterns within these trees through the use of the 0-1 sequences and continued fractions. Notice that both the SC-tree and the Stern-Brocot tree correspond one-to-one to the set $Q_+$, so there must be a way to link them together. Building on these results and tools we have obtained, in this section, we will naturally demonstrate the connection between the SC-tree and the Stern-Brocot tree, as well as the partial correspondence between the S-tree and the Calkin-Wilf tree. 
\subsection{The SC-tree and the Stern-Brocot tree}
\ 

 Upon reviewing the Stern-Brocot tree in Figure \ref{Stern-Brocot-image}, we observe that the SC-tree shares the same vertices as the Stern-Brocot tree within all levels, with the only difference being the order of the vertices. In this subsection, we will focus on elucidating the relationship between the SC-tree and the Stern-Brocot tree using the 0-1 sequence defined by Definition \ref{def:e2}.

\begin{theorem}\label{thm:en3}
 Let $\overline{E}$ and $\overline{F}$ be two 0-1 sequences of the same length, after adding '$1$' as the first digit, the value of  $\overline{1E}$ is $\frac{\Sigma_{i=1}^{t}2^{r_i-1}(2^{k_i+1}-2)}{2}$ and $\overline{1F}$ is $\lfloor\frac{\Sigma_{i=1}^{t}2^{r_i-1}(2^{k_i}+1)}{2}\rfloor$ , where $t,r_i,k_i\in Z_{+}$ and $r_{i+1}-(r_i+k_i)\geq 0$. Then the vertex ordered by $\overline{E}$ of the Stern-Brocot tree is the same as the vertex ordered by $\overline{F}$ of the SC-tree.
\end{theorem}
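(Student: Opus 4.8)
The plan is to use the continued fraction expansion of the common vertex as a shared coordinate for both trees, exactly as the paper already does for the SC-tree in Theorem \ref{thm:adde5}. Prepending a $1$ is only a device for recording leading zeros, so that the binary value of a length-$(L{+}1)$ string determines the underlying length-$L$ sequence; hence it suffices to fix one vertex and show that its Stern-Brocot position $\overline E$ and its SC position $\overline F$ (which automatically lie in the same level, since both sequences have length $L=\sum_{i=1}^{k}a_i-1$) have binary values linked by the stated transformation. Because that transformation is a bijection between the two orderings of a level, matching the codes vertex-by-vertex proves the theorem. By the reciprocal symmetry of both trees (Theorem \ref{thm:adde3}\,{\romannumeral3} for the SC-tree, and the left-right symmetry of the Stern-Brocot tree about $\frac11$) I would carry out the argument for $t\in(0,1)$, writing $t=[0,a_1,\dots,a_k]$ with $a_k\ge2$, and note that $t>1$ is analogous while $t=\frac11$ sits at level $1$ and is trivial.

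First I would record the two explicit sequences. On the SC side, Theorem \ref{thm:adde5} already gives $\overline F=\overline{1\,0^{a_1-1}1\,0^{a_2-1}\cdots 1\,0^{a_k-2}}$. On the Stern-Brocot side I would prove the classical dictionary as a lemma: writing $0$ for a left step and $1$ for a right step, the root-to-$t$ path is $0^{a_1}1^{a_2}0^{a_3}\cdots$ with its final run shortened by one, so that $\overline E=\overline{0^{a_1}1^{a_2}0^{a_3}\cdots}$ with last run of length $a_k-1$. This lemma is proved by induction on the level using the mediant rule $\frac{a}{b}\bigoplus\frac{c}{d}$: a left step replaces the upper endpoint by the mediant and a right step the lower one, and the resulting run structure of the path is precisely the continued-fraction algorithm.

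Next I would match the two encodings. Note that $\mathrm{val}(\overline{1E})=\sum_i 2^{r_i-1}(2^{k_i}-1)$ is merely the decomposition of an integer into its maximal runs of $1$'s, the $i$-th run occupying bit positions $r_i-1$ through $r_i+k_i-2$ and separated from the next by at least one $0$ (the hypothesis on $r_{i+1}-(r_i+k_i)$); reading these runs off from $\overline{1E}=\overline{1\,0^{a_1}1^{a_2}0^{a_3}\cdots}$ expresses each $(r_i,k_i)$ through the partial sums $S_j=\sum_{\ell\le j}a_\ell$. The heart of the proof is then to compute $\mathrm{val}(\overline{1F})$ directly from the SC sequence and verify it equals $\lfloor\frac{1}{2}\sum_i 2^{r_i-1}(2^{k_i}+1)\rfloor$. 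Since $\frac{1}{2}\,2^{r_i-1}(2^{k_i}+1)=2^{r_i+k_i-2}+2^{r_i-2}$, each Stern-Brocot run contributes to $\overline{1F}$ exactly one bit at the top of the run, position $r_i+k_i-2$, and one bit just below its bottom, position $r_i-2$; I would check these are precisely the positions of the isolated $1$'s of the SC code (the SC ones sit at the boundaries of the Stern-Brocot runs), with the single lowest term $2^{r_1-2}$ discarded by the floor exactly when $r_1=1$.

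The main obstacle I anticipate is this final bookkeeping: aligning the \emph{maximal-run} coordinates $(r_i,k_i)$ of the Stern-Brocot code with the \emph{isolated-}$1$ positions of the SC code while correctly handling the alternation of $0$- and $1$-runs (hence the dependence on the parity of $k$) and the two boundary adjustments, namely the shortened final block $0^{a_k-2}$ in $\overline F$ and the $\lfloor\cdot\rfloor$ that removes the half-bit $2^{r_1-2}$. Establishing the Stern-Brocot dictionary in the paper's orientation, so that $0$ and $1$ match left and right and the level indexing agrees with Definition \ref{def:e2}, is routine but must be done with care, since a misaligned convention would shift every exponent in the closing identity.
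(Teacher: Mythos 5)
Your proposal is correct, but it reaches Theorem \ref{thm:en3} by a genuinely different route than the paper. The paper's proof never touches continued fractions: it rewrites each vertex of both trees as a triple (left neighbour, mediant, right neighbour), shows by a simultaneous induction that these triples pass to children either in identical or in reversed order, and extracts from this a purely local digit rule --- change the first digit of $\overline{E}$, and thereafter change exactly those digits that follow a '$1$' (equivalently, after prepending a $1$, $F_i=E_i\oplus E_{i-1}$) --- and only then performs the prepend-$1$/append-$0$ run decomposition to get the stated arithmetic. You instead use the continued fraction $t=[0,a_1,\dots,a_k]$ as a shared coordinate: the SC code comes free from Theorem \ref{thm:adde5}, the Stern--Brocot code requires the classical dictionary $\overline{E}=0^{a_1}1^{a_2}0^{a_3}\cdots$ with last run shortened by one (your new lemma, provable by induction on the mediant construction exactly as you say), and the theorem then reduces to matching the two explicit run-length encodings; your bookkeeping is right ($\mathrm{val}(\overline{1E})=\Sigma_i 2^{r_i-1}(2^{k_i}-1)$, the $1$'s of $\overline{1F}$ sit at positions $r_i+k_i-2$ and $r_i-2$, i.e., at the tops of the maximal runs of $\overline{1E}$, and the floor discards $2^{r_1-2}$ precisely when $r_1=1$, which happens exactly when $k$ is even). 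What your route buys is modularity and explicitness: it reuses a theorem already in the paper plus a well-known fact, and yields closed-form codes for every vertex. What the paper's route buys is self-containedness and locality: its digit-flip rule relates the trees vertex-by-vertex without ever computing a continued fraction, and makes the final arithmetic transparent. One point you should make explicit when writing this up: the reduction of $t>1$ to $t<1$ ``by symmetry'' is not immediate, because reciprocation acts differently on the two codes (it complements every bit of the Stern--Brocot code, but flips only the first bit of the SC code by Theorem \ref{thm:adde3}); it does go through --- in the XOR form of the rule, complementing $\overline{E}$ changes exactly the first bit of $\overline{F}$ --- or one can simply rerun your computation with Corollary \ref{coro:adde2} in place of Theorem \ref{thm:adde5}.
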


\begin{proof}
\ 

\noindent Let $B$ be a vertex of the Stern-Brocot tree. Since every vertex (except the very beginnings $\frac{0}{1}$ and $\frac{1}{0}$) on the Stern-Brocot tree is the direct sum of two neighboring terms from the Stern-Brocot sequence\cite{Bates2010}, we can rewrite those vertices as $(B_l,B,B_r)$, where $B_l$ is its left addition term and $B_r$ is the right one ( shown in Figure \ref{SB-lr-tree-en}). Similarly, rewrite the vertices of the S-tree with the root vertex $\frac{a}{b}$ into a three-term form. Here the mediums is the denominator of the child of the original vertex, the left term and the right term are the numerators of the left child and the right child of that original vertex respectively, as shown in Figure \ref{SC-lr-tree-en}. 

\begin{figure}[ht]
  \centering
  \subfigure[\label{SB-lr-tree-en}Rewrite the vertices of the Stern-Brocot tree]{\includegraphics[width=0.6\linewidth]{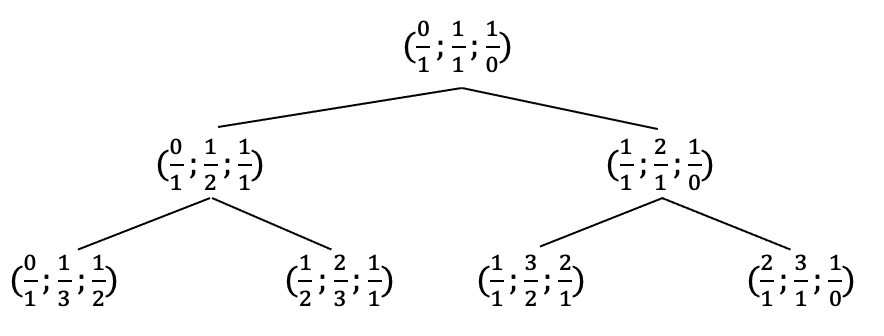}}
  \subfigure[\label{SC-lr-tree-en}Rewrite the vertices of the S-tree]{\includegraphics[width=0.9\linewidth]{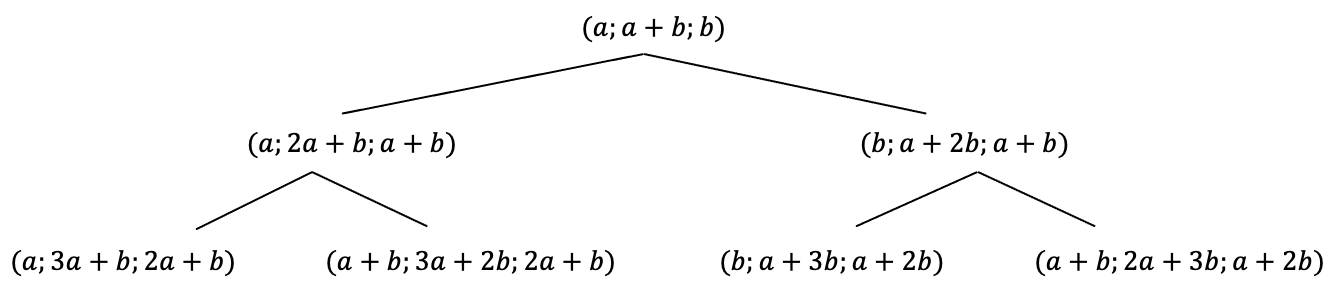}}
  \subfigure[\label{SC-11-tree-en}Replace a and b with  $\frac{1}{0}$ and  $\frac{0}{1}$ ]{\includegraphics[width=0.6\linewidth]{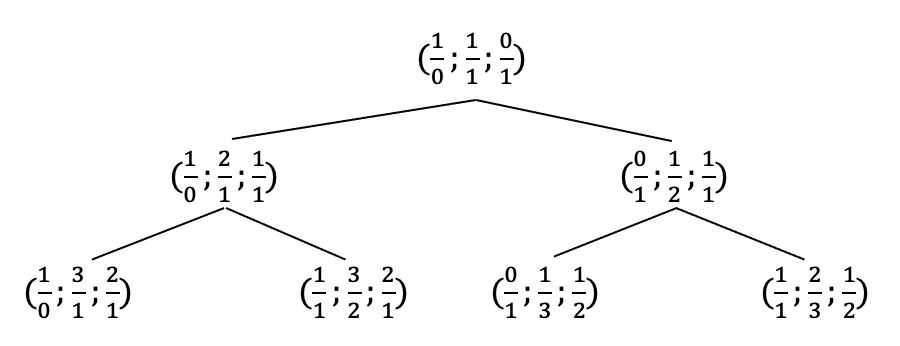}}
  \vspace{1em}
  \caption{\label{lr-tree-en}Rewrite the vertices}
\end{figure}

\noindent It is clear that when we use $\frac{1}{0}$ and  $\frac{0}{1}$ to replace $a$ and $b$ respectively , as shown in Figure \ref{SC-11-tree-en}, all the medium terms form the SC-tree. We can denote a vertex of the tree shown in Figure \ref{SC-11-tree-en} as $(D_l,D,D_r)$.   

\noindent Based on the growing pattern of the Stern-Brocot tree and the S-tree, the children of $(B_l,B,B_r)$ are $(B_l;B_l \bigoplus B;B)$ and $(B;B \bigoplus B_r;B_r)$, while for $(D_l,D,D_r)$, its children are $(D_l;D_l \bigoplus D;D)$ and $(D_r;D \bigoplus D_r;D)$. Consequently, the relationships among them can be summarized as follows:

\noindent \textbf{Case 1}: If $(B_l,B,B_r)$ is the same as $(D_l,D,D_r)$, then their left children are identical, and their right children are in reverse order.

\noindent \textbf{Case 2}: If $(B_l,B,B_r)$ and $(D_l,D,D_r)$ are in reverse order, then the left child of $(B_l,B,B_r)$ matches the right child of $(D_l,D,D_r)$, while the right child of $(B_l,B,B_r)$ and the left child of $(D_l,D,D_r)$ are in reverse order.

\noindent For any positive reduced fraction $W$ (expect $\frac{1}{1}$), let the 0-1 sequence ordering $W$ of the Stern-Brocot tree be denoted as $\overline{E}$ and the 0-1 sequence ordering $W$ of the SC-tree be denoted as $\overline{F}$. Since only the medium terms are exactly equivalent to the vertices of the two trees, and their roots are in reverse order, coordinate with the second situation, we have: 

\begin{itemize}
    \item The first digit  of $\overline{E}$ and $\overline{F}$ are different. 
    \item If the final digit of $\overline{E}$ is '$0$', then \textbf{\textit{add '$0$' and '$1$'}} to the right side of $\overline{F}$ can construct the 0-1 sequence on the SC-tree ordering the $W$'s left child and right child respectively; else, if the final digit of $\overline{E}$ is '$1$',  \textbf{\textit{add '$1$' and '$0$' }} achieves the same result. 
\end{itemize}

\noindent So $\overline{E}$ and $\overline{F}$ share the same length, and we can obtain $\overline{F}$ from the digits of $\overline{E}$ using the following recursive algorithm:  

\textit{Step 1: Change the first digit}; 

\textit{Step 2 : Each time '$1$' is encountered in $\overline{E}$, change the number immediately behind this '$1$' while leaving the number in place if '$0$' is encountered. Continue this process until reaching the final digit of $\overline{E}$. } 

\noindent For example, if the 0-1 sequence ordering $W$ of the Stern-Brocot tree is $\overline{E}=\overline{010011}$, then the 0-1 sequence of $W$ on the SC-tree is $\overline{F}=\overline{111010}$.

\noindent Adding '$1$' as the very first digit and '$0$' as the very last digit to the 0-1 sequence of the two trees aligns the original first and final digits with \textit{Step 2}. This adjustment allows us to express $\overline{1E0}$ as the sum of several 0-1 sequences  denoted as $\overline{\underbrace{1\cdots1}_{m}0\underbrace{0\cdots0}_{n}}$ with  a value of $2^{n}(2^{m+1}-2)$ ($m\geq 1, n\geq 0$). When $\Sigma_{i=1}^{t}2^{r_i-1}(2^{k_i+1}-2)$ represents the value of the adjusted 0-1 sequence $1E0$, then $\Sigma_{i=1}^{t}2^{r_i-1}(2^{k_i}+1)$ must correspond to $\overline{1F0}$ or $\overline{1F1}$. Adding '$0$' as the final digit to a 0-1 sequence is equivalent to multiply its value by 2, and then dividing the result by 2 while eliminating its decimal part, effectively eliminating the adjustment of add '$0$'. To ensure continuity and avoid accidental splitting of consecutive '$1$'s ( like dividing $\overline{1110}$ into the sum of $\overline{1000}$ and $\overline{110}$, which is not the expected way ), we introduce the condition $r_{i+1}-(r_i+k_i)\geq 0$.

\noindent In summary,  when we add '$1$' on the far left to $\overline{E}$ and $\overline{F}$( two  0-1 sequences of the same length ), the value of $\overline{1E}$ is $\frac{\Sigma_{i=1}^{t}2^{r_i-1}(2^{k_i+1}-2)}{2}$, and  the value of $\overline{1F}$ is $\lfloor\frac{\Sigma_{i=1}^{t}2^{r_i-1}(2^{k_i}+1)}{2}\rfloor$ , where $t,r_i,k_i\in Z_{+}$, $r_{i+1}-(r_i+k_i)\geq 0$, and $\lfloor x \rfloor$ represents the greatest integer that is less than or equal to $x$. Then, the vertex of the Stern-Brocot tree indicated by $\overline{E}$, is exactly the vertex of the SC-tree indicated by $\overline{F}$.  

\noindent The result follows.
    
\end{proof}

\begin{example}\label{exam:02-en}
   
   Referring to the analysis in Example \ref{exam:01-en}, the 63rd vertex in level 70 of the Stern-Brocot tree corresponds to the 0-1 sequence $\overline{\underbrace{0\cdots 0}_{70-1-(5+1)}111110}$. Adding '$1$' and '$0$' as the very first and very last digits results in the adjusted 0-1 sequence: $$\overline{1\underbrace{0\cdots 0}_{63}1111100}=2^{69}(2^2-2)+2(2^6-2).$$ According to Theorem \ref{thm:en3}, this adjusted sequence corresponds to the same vertex in the SC-tree, indicated as $$\lfloor\frac{2^{69}(2^2-2)+2(2^6-2)}{2}\rfloor=2^{69}+2^{68}+2^5+2^0.$$ Removing the first digit leaves $\overline{1\underbrace{0\cdots 0}_{62}100001}$ as the real location indicator. For $2^{68}+2^5+2^0+1=2^{68}+2^5+2^1, 69+1=70$, accordingly, the mentioned vertex is the No.$2^{68}+2^5+2^1$ vertex in level $70$ on the SC-tree.
   
\end{example}

The one-to-one correspondence between the SC-tree and the Stern-Brocot tree is illustrated by Theorem \ref{thm:en3}, utilizing the 0-1 sequences. In order to demonstrate the relationship among the four trees mentioned before, we need to involve the Calkin-Wilf tree. While previous papers, including those by Bruce Bates, have explored the correspondence between the Stern-Brocot tree and the Calkin-Wilf tree, we choose to simply establish this linkage by locating the vertices of the S-tree in the Calkin-Wilf tree, as shown in the next subsection.

\subsection{Locating vertices of S-tree in the Calkin-Wilf tree}
\ 

The Calkin-Wilf tree, as shown in Figure \ref{Calkin-Wilf-image}, is rooted at $\frac{1}{1}$, and any rational number expressed in simplest terms as the fraction $\frac{a}{b}$ has its two children, $\frac{a}{a+b}$ and  $\frac{a+b}{b}$. The similarities between the Calkin-Wilf tree and the S-tree can be found in Definition \ref{def:e1}. The vertices of the Calkin-Wilf tree correspond one-to-one to all reduced rational numbers in $(0,+\infty)$\cite{Calkin}, meaning that every fraction growing on the S-tree can be located in the Calkin-Wilf tree. Our main goal in this subsection is to illustrate the relationship between the Calkin-Wilf tree and the S-tree, which might not be immediately apparent when comparing Figure \ref{Calkin-Wilf-image} and Figure \ref{S-tree-e1}.

When numbering the levels of the Calkin-Wilf tree starting from $0$, with $\frac{1}{1}$ as the root, and ordering the vertices using 0-1 sequences based on Definition \ref{def:e2}, a vertex from the S-tree can be located on the Calkin-Wilf tree using the following algorithm.

\begin{theorem}\label{thm:add2-en}

Let the binary expansion of $N_s-1$ be  $\Sigma_{i=1}^{k}2^{r_i}$ ( for $u<v$, $r_u>r_v$ ). Then, the $N_s$-th vertex in the $M$-th level ($M\ge 1, N_s\le 2^{M-1}$) of the S-tree corresponds the $N_c$-th vertex in the $M$-th level of the Calkin-Wilf tree, where $N_c=\left\{
\begin{aligned}
    &1+\Sigma_{j=1}^{\frac{k}{2}}(2^{r_{2j-1}+1}-2^{r_{2j}+1}),\ \ 2\mid {k} \\
    &1+2^{M}-2^{r_1}+\Sigma_{j=1}^{\frac{k-1}{2}}(2^{r_{2j}+1}-2^{r_{2j+1}+1}),\ \ \ \ \ 2\nmid {k}\\
\end{aligned}
    \right.$. 
\end{theorem}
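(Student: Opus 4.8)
The plan is to use the continued fraction expansion as a shared coordinate for the two trees, exploiting that Theorem~\ref{thm:en2} already expresses the S-tree data of $q=[0,a_1,\dots,a_\kappa]$ (with $a_\kappa\ge 2$) in exactly these terms; throughout I write $s_i=a_1+\cdots+a_i$, and I note that the statement's index $k$, the number of $1$'s in the binary expansion of $N_s-1$, equals $\kappa-1$. The first step is to confirm that $q$ occupies the \emph{same} level $M=-1+\sum_{i=1}^{\kappa}a_i$ in both trees. On the S-tree this is Theorem~\ref{thm:en2}; on the Calkin--Wilf tree it follows because inverting its child rule $\frac ab\mapsto\bigl(\frac{a}{a+b},\frac{a+b}{b}\bigr)$ gives the subtractive step $\frac ab\mapsto\frac{a}{b-a}$ for $a<b$ and $\frac ab\mapsto\frac{a-b}{b}$ for $a>b$, i.e.\ the subtractive Euclidean algorithm, whose number of steps from $q$ up to $\frac11$ equals $\sum_i a_i-1=M$. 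This is what legitimizes comparing the two indices inside one common level, as the statement demands.

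The second step, the analogue of the S-tree formula in Theorem~\ref{thm:en2}, is to read off the Calkin--Wilf $0$--$1$ sequence $\sigma_C(q)$ from the continued fraction. Tracing the path from $q$ up to the root and reversing it, an induction on $\kappa$ (using that in the Calkin--Wilf tree the two child maps act on the continued fraction by either increasing the leading partial quotient by $1$ or prefixing a new one, depending on whether the vertex lies below or above $1$) shows that the root-to-$q$ word has run-length encoding $(a_\kappa-1,\,a_{\kappa-1},\dots,a_2,\,a_1)$, with consecutive runs alternating between left and right moves and the deepest run being a left move, since every $q<1$ is a left child. Converting left $\mapsto 0$ and right $\mapsto 1$ as in Definition~\ref{def:e2}, $\sigma_C(q)$ becomes a concatenation of alternating blocks whose lengths are these run lengths; crucially, the blocks of $1$'s are exactly the runs carrying the even-indexed partial quotients $a_2,a_4,\dots$.

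The third step converts both strings to integers. From Theorem~\ref{thm:en2}, $N_s-1=\sum_{i=1}^{\kappa-1}2^{\,s_i-1}$, so the decreasing exponents of the statement satisfy $r_i=s_{\kappa-i}-1$. For $N_c-1$ I would use that a maximal block of $1$'s occupying bit positions $p$ through $q$ contributes $2^{\,q+1}-2^{\,p}$: the block carrying $a_i$ (for even $i<\kappa$) sits at positions $s_{i-1}$ through $s_i-1$ and contributes $2^{\,s_i}-2^{\,s_{i-1}}$. Substituting $s_i=r_{\kappa-i}+1$ and re-indexing (the increasing partial sums $s_i$ run opposite to the decreasing $r_i$) turns the sum of block contributions into the paired form $\sum_j\bigl(2^{\,r_{2j-1}+1}-2^{\,r_{2j}+1}\bigr)$ claimed for $N_c$.

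I expect the genuine obstacle, and the origin of the two cases, to be the \emph{topmost} run of $\sigma_C(q)$. Whether its highest block consists of $1$'s depends on the parity of $\kappa$, equivalently of $k$: when $k$ is even the leading run carries $a_\kappa$ as a block of $0$'s, so the summation closes with no boundary correction; when $k$ is odd the leading run is a block of $1$'s reaching the most significant bit $M-1$, which contributes an extra $2^{M}-2^{\,s_{\kappa-1}}=2^{M}-2^{\,r_1+1}$ and re-labels the remaining pairs as $\sum_j\bigl(2^{\,r_{2j}+1}-2^{\,r_{2j+1}+1}\bigr)$. Pinning down this boundary block, in tandem with the index reversal between $s_i$ and $r_i$, is the delicate part and the most error-prone step (in particular, the derivation yields the leading term $2^{\,r_1+1}$ in the odd case, a point worth double-checking against the displayed formula, consistent with $N_c$ always being odd because every $q<1$ is a left child).
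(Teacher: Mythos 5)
Your proposal is correct, but it follows a genuinely different route from the paper's. The paper never touches continued fractions in this proof: it first records two propagation rules (if a vertex of the S-tree and a vertex of the Calkin-Wilf tree are identical, their left children coincide and their right children are reciprocal; if they are reciprocal, the right child on the S-tree side coincides with the left child on the Calkin-Wilf side and the remaining pair is reciprocal), and then, by induction on $k$, performs surgery directly on the 0-1 word: each segment of $\overline{X_a}$ of the form $\overline{1\underbrace{0\cdots0}_{j}1}$ is replaced by $\overline{1\underbrace{1\cdots1}_{j}0}$, a leading '$0$' is prepended, and in the odd-$k$ case one first builds the word of the reciprocal $\frac{1}{a}$ and then complements every digit using the left-right symmetry of the Calkin-Wilf tree. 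Your argument instead routes both trees through the continued fraction $[0,a_1,\dots,a_\kappa]$: Theorem \ref{thm:en2} on the S-tree side, and on the Calkin-Wilf side the run-length description $(a_\kappa-1,a_{\kappa-1},\dots,a_1)$ of the root-to-vertex word, whose blocks you sum one by one. Your route must first establish the Calkin-Wilf/continued-fraction lemma (your induction sketch is the right mechanism), but in exchange it explains \emph{why} the blocks of '$1$'s sit exactly at the even-indexed partial quotients and why the case split is governed by the parity of $k=\kappa-1$; the paper's route is self-contained in 0-1 language, at the price of a state-flipping recipe that looks more ad hoc. In the even-$k$ case the two computations agree exactly.

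Your flagged discrepancy in the odd-$k$ case is real, and your version is the correct one: the leading correction must be $-2^{r_1+1}$, not $-2^{r_1}$ as displayed in the statement (the same slip occurs in the final evaluation line of the paper's proof). Indeed, in the paper's own proof the word $\overline{Y_a}$ for odd $k$ begins with a block of ones occupying bit positions $M-1$ down to $r_1+1$, which contributes $2^{M}-2^{r_1+1}$; and the paper's Example \ref{exam:03-en} evaluates the position as $2^{70}-2^{5+1}+2^{4+1}-2^{3+1}+2^{2+1}-2^{1+1}$ with $r_1=5$, i.e., with exponent $r_1+1$. A minimal check confirms this: for $N_s=3$, $M=3$ the S-tree vertex is $\frac{2}{5}$, which is the 5th vertex of level 3 of the Calkin-Wilf tree; here $k=1$, $r_1=1$, and $1+2^3-2^{r_1+1}=5$, while the displayed formula gives $7$. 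Your parity observation ($N_c$ must be odd because every $q<1$ is a left child) also rules out the displayed form, which fails whenever $r_1=0$.
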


\begin{proof}
\ 

\noindent According to the growth rules of both the S-tree and the Calkin-Wilf tree, following properties can be observed: 

    \begin{itemize}    
    \item \textit{ If the vertex $A$ of the S-tree and the vertex $B$ of the Calkin-Wilf tree are identical, then they have the same left child and their right child are reciprocal to each other. } 
    \item \textit{If the vertex $A$ of the S-tree and the vertex $B$ of the Calkin-Wilf tree are reciprocal to each other, then the right child of $A$ is the same as the left child of $B$, and the other child of $A$ and $B$ are reciprocal to each other. }
    \end{itemize}

\noindent Thus, all the fractions appear in the $M$-th level of the S-tree are also present in the $M$-th level of the Calkin-Wilf tree. The leftmost branch of the S-tree is an exact match with that of the Calkin-Wilf tree. When examining the vertices with 0-1 sequences that contain '$1$'s, we denote $\overline{X_a}$ as the 0-1 sequence of vertex $a$ on the S-tree, $\overline{Y_b}$ as the 0-1 sequence of vertex $b$ on the Calkin-Wilf tree.  \newline
According to the second property, when $\overline{X_a}=\overline{\underbrace{0\cdots0}_{n}1}, \overline{Y_b}=\overline{0\underbrace{0\cdots0}_{n}1}$ , it implies that vertices $a$ and $b$ are reciprocal, and they both occupy the 2nd position in level $M-r_1$ ( Note that $\frac{1}{2}$ is not included when ordering the vertices of the S-tree while $\frac{1}{2}$ is represented by '$0$' of the Calkin-Wilf tree. Therefore, $\overline{Y_b}$ has a first extra '$0$' compared to $\overline{X_a}$  ). Additionally, when 
$$\overline{X_a}=\overline{\underbrace{0\cdots0}_{n_1}1\underbrace{0\cdots0}_{n_2}1}$$  
$$ \overline{Y_b}=\overline{0\underbrace{0\cdots0}_{n_1}1\underbrace{1\cdots1}_{n_2}0}$$ we would see $a=b$.  \newline
The $N_s$-th ( $N_s=2^{r_1}+2^{r_2}+\cdots+2^{r_k}$ ) vertex in the $M$-th level of the S-tree is represented by the 0-1 sequence: 
$$\overline{X_a}=\overline{\underbrace{0\cdots0}_{M-r_1-2}1\underbrace{0\cdots0}_{r_1-r_2-1}10\cdots01\underbrace{0\cdots0}_{r_{i-1}-r_{i}-1}10\cdots01\underbrace{0\cdots0}_{r_{k-1}-r_{k}-1}1\underbrace{0\cdots0}_{r_k}}.$$
\noindent Similarly, we prove our result by induction on $k$. \newline
When $2\mid {k}$, by changing $\overline{X_a}$'s sequence segments that match $\overline{1\underbrace{0\cdots0}_{r_{i-1}-r_{i}-1}1}$ into $\overline{1\underbrace{1\cdots1}_{r_{k-1}-r_{k}-1}0}$ and adding '$0$' as the very first digit, we can obtain $\overline{Y_a}$:  

$$\overline{Y_a}=\overline{0\underbrace{0\cdots0}_{M-r_1-2}1\underbrace{1\cdots1}_{r_1-r_2-1}00\cdots01\underbrace{1\cdots1}_{r_{i-1}-r_{i}-1}00\cdots01\underbrace{1\cdots1}_{r_{k-1}-r_{k}-1}0\underbrace{0\cdots0}_{r_k}}.$$ Hence the value of $\overline{Y_a}$ is $\Sigma_{j=1}^{\frac{k}{2}}(2^{r_{2j-1}+1}-2^{r_{2j}+1})$.  \newline
When $2\nmid {k}$, we can firstly get $\overline{Y_{\frac{1}{a}}}$ by the similar treatment, and have the result:

    $$\overline{Y_{\frac{1}{a}}='0\underbrace{0\cdots0}_{M-r_1-2}1\underbrace{1\cdots1}_{r_1-r_2-1}00\cdots01\underbrace{1\cdots1}_{r_{i-1}-r_{i}-1}00\cdots01\underbrace{1\cdots1}_{r_{k-2}-r_{k-1}-1}00\cdots01\underbrace{1\cdots1'}_{r_k}}.$$
\noindent As the Calkin-Wilf tree certainly owns symmetry, $\overline{Y_{a}}$ could get by interchanging '$0$' and '$1$' from $\overline{Y_{\frac{1}{a}}}$. That is, 
    $$\overline{Y_{a}}=\overline{1\underbrace{1\cdots1}_{M-r_1-2}0\underbrace{0\cdots0}_{r_1-r_2-1}11\cdots10\underbrace{0\cdots0}_{r_{i-1}-r_{i}-1}11\cdots10\underbrace{0\cdots0}_{r_{k-2}-r_{k-1}-1}11\cdots10\underbrace{0\cdots0}_{r_k}}.$$
\noindent In this case, the value of $\overline{Y_a}$ is $2^{M}-2^{r_1}+\Sigma_{j=1}^{\frac{k-1}{2}}(2^{r_{2j}+1}-2^{r_{2j+1}+1})$.  \newline
The result follows.
    
\end{proof}

\begin{example}\label{exam:03-en}
   Refer to Example \ref{exam:01-en}, the 63rd vertex in level 70 of the S-tree is $\frac{328}{853}$.  Let $\overline{X_{\frac{328}{853}}}$ denote the 0-1 sequence of this fraction of the S-tree, while and $\overline{Y_{\frac{328}{853}}}$ represents that of the Calkin-Wilf tree. Since $63=1+2^5+2^4+2^3+2^2+2^1, 5\nmid 2$, according to Theorem \ref{thm:add2-en}, the value of $\overline{Y_{\frac{328}{853}}}$ is $2^{70}-2^{5+1}+2^{4+1}-2^{3+1}+2^{2+1}-2^{1+1}=2^{70}-2^{6}+2^{4}+2^{2}$. Thus, $\frac{328}{853}$ corresponds to the $(1+2^{70}-2^{6}+2^{4}+2^{2})$-th vertex in level $70$ of the  Calkin-Wilf tree. Actually, as $\overline{X_{\frac{328}{853}}}=\overline{\underbrace{0\cdots 0}_{70-1-(5+1)}111110}$, applying the steps mentioned in the analysis of Theorem \ref{thm:add2-en} reveals that $\overline{Y_{\frac{328}{853}}}=\overline{1\underbrace{1\cdots 1}_{70-1-(5+1)}010100}$, which still translates to the same value,  $2^{70}-2^{6}+2^{4}+2^{2}$.   
\end{example}

As demonstrated in Example \ref{exam:03-en}, Theorem \ref{thm:add2-en} provides a method for establishing a connection between the S-tree and the Calkin-Wilf tree. What's noteworthy is that this connection is based solely on 0-1 sequences, allowing for the precise location of specific rational numbers. This result is non trivial as the arrangement of rationals within the S-tree is different from that in the Calkin-Wilf tree, which make the pattern less obvious. 

 \begin{figure}[ht]
  \centering
  \includegraphics[width=0.38\textwidth]{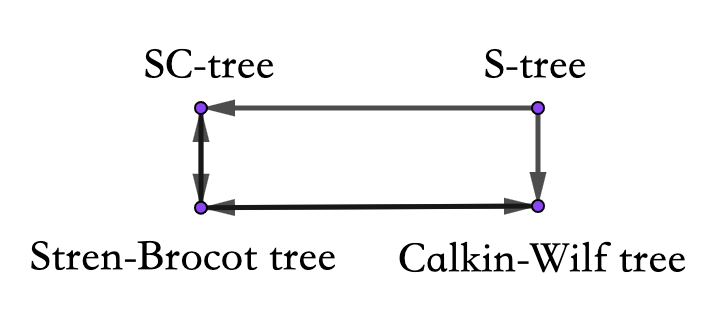}
  \caption{\label{image:looop}The linking loop among the four trees}
\end{figure}

 All connections among the S-tree, the Calkin-Wilf tree, the SC-tree and the Stern-Brocot tree have been established by Theorem \ref{thm:en3} and \ref{thm:add2-en}. The intricate web connecting these four trees, as depicted in Figure \ref{image:looop}, can be illuminated by simply calling the 0-1 sequences. To be specific, Theorem \ref{thm:en3} demonstrates a one-to-one mapping of vertices from the SC-tree to the Stern-Brocot tree. Meanwhile, Definition \ref{def:e3} and Theorem \ref{thm:add2-en} illustrate that the S-tree serves as the crucial bridge, encapsulating the essential details within the other two trees. Building upon these connections, the linkage between the Stern-Brocot tree and the Calkin-Wilf tree becomes intuitively evident, offering a delightful insight into their interrelationship.

\section{Application}
\ 

Building on the results obtained for the four trees, this subsection will highlight how these properties serve as a theoretical foundation for potential applications.  

\subsection{Fibonacci sequence}
\ 

Observe that in the S-tree, the right child of $\frac{m}{n}$ is $\frac{n}{m+n}$, indicating that starting from $\frac{1}{1}$ and consistently choosing the right child leads to vertices with adjacent terms of the Fibonacci sequence as their numerator and denominator. In this subsection,  we leverage the properties we've established to demonstrate the role of the S-tree in relation to the Fibonacci sequence. 

\begin{corollary}\label{coro:02-en} The $n$-th term of the Fibonacci sequence (begins with $0$ and $1$) corresponds the numerator of the continued fraction $[0,\underbrace{1,\cdots,1}_{n}]$.
\end{corollary}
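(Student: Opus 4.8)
The plan is to prove the equivalent and slightly more informative statement that the full all-ones continued fraction is a ratio of consecutive Fibonacci numbers, namely $[0,\underbrace{1,\cdots,1}_{n}] = \frac{F_n}{F_{n+1}}$, where $F_0=0,\,F_1=1$ and $F_{k+1}=F_k+F_{k-1}$. Since consecutive Fibonacci numbers are coprime, the right-hand side is already in lowest terms, so its numerator is exactly $F_n$, which is precisely what the corollary asserts. In this way the whole statement reduces to identifying the value of this continued fraction.

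I would carry this out by induction on $n$, exploiting the defining nesting of the continued fraction. Writing $q_n := [0,\underbrace{1,\cdots,1}_{n}]$, the recursive structure gives $q_n = \frac{1}{1+q_{n-1}}$ for $n\ge 2$. For the base case, $q_1 = [0,1] = \frac{1}{1} = \frac{F_1}{F_2}$. For the inductive step, assuming $q_{n-1} = \frac{F_{n-1}}{F_n}$, I compute $q_n = \frac{1}{1 + F_{n-1}/F_n} = \frac{F_n}{F_n + F_{n-1}} = \frac{F_n}{F_{n+1}}$, where the last equality is exactly the Fibonacci recurrence. This closes the induction and yields the claim.

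Alternatively, and more in keeping with this section, the same fact can be read off the rightmost branch $R_{\frac{1}{1}}$ of the S-tree. Since the right child of $\frac{m}{n}$ is $\frac{n}{m+n}$, an immediate induction shows that the successive right descendants of $\frac{1}{1}$ are exactly $\frac{F_n}{F_{n+1}}$. On the continued-fraction side, the computation in the proof of Theorem \ref{thm:en2} shows that passing to a right child prepends a $1$ to the partial quotients, so the $n$-th vertex on this branch carries the continued fraction $[0,\underbrace{1,\cdots,1}_{n}]$; comparing these two descriptions of the same vertex delivers the numerator $F_n$ directly.

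The computation itself is routine, so the only genuine care needed is the bookkeeping of conventions. One must fix the Fibonacci indexing ($F_0=0,\,F_1=1$) so that $n$ ones match $F_n$, and observe that $[0,\underbrace{1,\cdots,1}_{n}]$ is written in the non-canonical form ending in a $1$ (equivalently $[0,\underbrace{1,\cdots,1}_{n-2},2]$), which does not change its value. I expect the alignment of these indices, together with making the base case consistent with the special vertex $\frac{1}{1}$ (which has only the single child $\frac{1}{2}$ in the S-tree), to be the main, and only minor, obstacle rather than any substantive difficulty.
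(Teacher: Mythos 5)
Your proposal is correct, but it proves the corollary by a genuinely different route than the paper. Your primary argument is a self-contained induction on the continued-fraction recursion $q_n=\frac{1}{1+q_{n-1}}$ combined with the Fibonacci recurrence, yielding the sharper identity $[0,\underbrace{1,\cdots,1}_{n}]=\frac{F_n}{F_{n+1}}$, with coprimality of consecutive Fibonacci numbers guaranteeing that the numerator in lowest terms is literally $F_n$; this makes no use of the S-tree at all. The paper instead works inside the S-tree: it observes that the $n$-th Fibonacci numerator sits at the rightmost vertex of level $n-1$ (the far-right branch), so $N-1$ is a sum of consecutive powers of $2$, and then it runs the locating formula of Theorem \ref{thm:en2} \emph{in reverse}: consecutive exponents in the binary expansion of $N-1$ force all partial quotients to equal $1$, giving the continued fraction $[0,\underbrace{1,\cdots,1}_{n}]$. (The paper's proof is quite terse and contains slips --- it cites Theorem \ref{thm:e1} where Theorem \ref{thm:en2} is meant, uses undefined notation $p_i$, and has off-by-one inaccuracies in the position of the rightmost vertex --- so your write-up is in fact the more rigorous of the two.) Your alternative sketch via the right branch is closer in spirit to the paper, but still differs in mechanism: you propagate continued fractions down the branch (each right step prepends a $1$), whereas the paper recovers the continued fraction from the coordinates $(M,N)$. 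What each approach buys: yours is elementary, complete, independent of the tree machinery, and also identifies the denominator; the paper's serves the section's purpose of exhibiting the Fibonacci sequence as an application of the S-tree and of showing that the locating algorithm of Theorem \ref{thm:en2} is invertible, at the cost of relying on that earlier theorem.
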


\begin{proof}
\ 

   \noindent The $n$-th term of the Fibonacci sequence is located at the $2^{n-1}$-th vertex in level $n-1$ of the S-tree. According to Theorem \ref{thm:e1},  $M=n-1,N-1=2^{n-1}-1=2^{n-2}+2^{n-3}+\cdots+2^{1}+2^{0}$. Therefore, $k=n-2,p_{i-1}-p_i=1,i=1,2,\cdots,k$, and the continued fraction representation is $[0,1,\underbrace{1,\cdots,1}_{n-2},1]=[0,\underbrace{1,\cdots,1}_{n}]$. 
\end{proof}

When $n$ is not very large,  Corollary \ref{coro:02-en} offers a convenient method that avoids complex calculations involving irritations when compared to the well-known formula $F_n=\frac{1}{\sqrt{5}}\left(\left(\frac{1+\sqrt{5}}{2}\right)^n+\left(\frac{1-\sqrt{5}}{2}\right)^n\right)$. Actually, Corollary \ref{coro:02-en} provides an alternative explanation for the continued fraction representation of the Fibonacci sequence, as $\frac{\sqrt{5}-1}{2}=[0,1,1,1,\cdots]$. Moreover, when the Fibonacci sequence starts with other positive integers $F_1,F_2(F_1<F_2)$, calculation method similar to Corollary \ref{coro:02-en} can be applied, starting from the vertex $\frac{F_1}{F_2}$ of the S-tree.

\subsection{Trigonometric function}
\ 

In this subsection, we explore another application of the S-tree. This topic was prompted by the problem 2 of 24th USAMO:
 
\textit{ A calculator is broken so that the only keys that still work are the $\, \sin, \; \cos,$ $\tan, \; \sin^{-1}, \; \cos^{-1}, \,$ and $\, \tan^{-1} \,$ buttons. The display initially shows 0. Given any positive rational number $\, q, \,$ show that pressing some finite sequence of buttons will yield $\, q$. Assume that the calculator does real number calculations with infinite precision. All functions are in terms of radians.}\cite{GP}

Notice that, $ \sin{\left(\arctan{\left(\sqrt{\frac{a}{b}}\right)}\right)}=\sqrt{\frac{a}{a+b}}$, and $ \cos{\left(\arctan{\left(\sqrt{\frac{a}{b}}\right)}\right)}=\sqrt{\frac{b}{a+b}}$ can be concluded for any right triangle with base $\sqrt{a}$ and height $\sqrt{b} ( a,b\in Z_{+},b\geq a )$.  Consequently, $\frac{a}{a+b}$ and $\frac{b}{a+b}$ are the children of $\frac{a}{b}$. Thus, the answer to the problem mentioned above can be obtained using the S-tree.

\begin{corollary}\label{coro:03-en}
\ \ 
For any given reduced rational number $q\in (0,1)$, when the 0-1 sequence ordering $q^2$ of the S-tree is rewritten using a composite function, where '$0$' is replaced with $\sin{\left(\arctan{(x)}\right)}$ and '$1$' is replaced with $\cos{\left(\arctan{(x)}\right)}$, the resulting composite function $F(x)$ satisfies the property that $F(\cos{0})=q$.

\end{corollary}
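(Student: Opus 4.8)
The plan is to read the two admissible operations as the left- and right-descent moves of the S-tree, acting on the square roots of vertex labels. Setting $s(x)=\sin(\arctan x)$ and $c(x)=\cos(\arctan x)$, I would first record the right-triangle identities already displayed above, $s(\sqrt{a/b})=\sqrt{a/(a+b)}$ and $c(\sqrt{a/b})=\sqrt{b/(a+b)}$. Reading $x=\sqrt{m/n}$ as a proxy for the vertex $\frac{m}{n}$, these say precisely that $s$ carries (the square root of) a vertex to its left child $\frac{m}{m+n}$ and $c$ to its right child $\frac{n}{m+n}$, which is exactly the growth rule of Definition \ref{def:e1}. Hence a single step down the tree is post-composition with $s$ (left) or $c$ (right), and throughout the descent the tracked quantity stays equal to the square root of the current vertex.

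Next I would anchor the process at the root. Because $\cos 0=1=\sqrt{1/1}$, the input $\cos 0$ is the square root of the root label $\frac{1}{1}$; and since $q\in(0,1)$ is reduced, $q^2$ is again a reduced rational in $(0,1)$, so by Theorem \ref{thm:e1} it sits at a unique vertex of the S-tree, say at level $k$ with 0-1 sequence $d_1 d_2\cdots d_{k-1}$ in the sense of Definition \ref{def:e2}. The substance of the corollary is the assertion that composing the descent maps along the path from the root to $q^2$, applied to $\cos 0$, returns $\sqrt{q^2}=q$. I would establish this by induction on depth: assuming the partial composite sends $\cos 0$ to the square root of the vertex reached after $j$ edges, appending $s$ or $c$ according to whether the next edge turns left or right advances the value to the square root of the correct child by the identities of the previous paragraph, so the invariant is preserved until $q^2$ is reached.

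The delicate point, and the step I expect to be the main obstacle, is matching the bookkeeping of the full root-to-$q^2$ path with the 0-1 sequence, which by Definition \ref{def:e2} only labels edges from level $1$ onward and omits the forced edge from $\frac{1}{1}$ to $\frac{1}{2}$. I would resolve this boundary mismatch by noting that the initial edge is unambiguous: both operations agree there, $s(\cos 0)=c(\cos 0)=\sqrt{1/2}$, so the descent to $\frac{1}{2}$ is produced whichever symbol is used, which is exactly what lets the whole composite be driven by the 0-1 sequence of $q^2$. I would also fix the orientation of composition, requiring the first symbol read to be applied innermost so that the left-to-right reading of $d_1\cdots d_{k-1}$ tracks the top-to-bottom traversal; with the initial edge and the orientation pinned down, the induction closes and $F(\cos 0)=q$ follows. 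Finally I would check the smallest case $q=\tfrac12$, where $q^2=\tfrac14$ carries the 0-1 sequence $\overline{00}$ and the composite, including the forced initial descent, indeed evaluates to $\tfrac12$.
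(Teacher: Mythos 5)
Your overall strategy---reading $\sin(\arctan x)$ and $\cos(\arctan x)$ as left/right descent maps acting on square roots of vertex labels, anchoring at $\cos 0=1=\sqrt{1/1}$, and inducting down the path to $q^2$---is exactly the route the paper takes: its own justification consists of the two right-triangle identities, the existence and uniqueness of the vertex $q^2$ from Theorem \ref{thm:e1}, and the finiteness of the branch joining $\frac{1}{1}$ to $q^2$. You are also right that the interface between the forced edge $\frac{1}{1}\to\frac{1}{2}$ and Definition \ref{def:e2} is the delicate point. But your resolution of that point does not work, and this is a genuine gap. Writing $s(x)=\sin(\arctan x)$ and $c(x)=\cos(\arctan x)$ as you do: the 0-1 sequence of a vertex at level $k$ has $k-1$ digits, while the root-to-$q^2$ path has $k$ edges, so a composite built from the digits alone performs only $k-1$ descents. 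The observation $s(\cos 0)=c(\cos 0)=\sqrt{1/2}$ does not repair this arity mismatch; it only says that whichever digit's function sits innermost gets used up performing the forced descent, destroying the information in $d_1$. Under your (correct) first-digit-innermost orientation, $F(\cos 0)$ is then the square root of the level-$(k-1)$ vertex whose 0-1 sequence is $\overline{d_2\cdots d_{k-1}}$, and by Theorem \ref{thm:e1} that vertex can never be $q^2$, which lives at level $k$ and occurs only once in the tree. So the induction cannot close as you describe.

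Your own final check exposes the inconsistency: for $q=\frac{1}{2}$, $q^2=\frac{1}{4}$ has 0-1 sequence $\overline{00}$, so the composite the statement defines is $s\circ s$, and $s(s(\cos 0))=s\bigl(\sqrt{1/2}\bigr)=\sqrt{1/3}\neq\frac{1}{2}$. The value $\frac{1}{2}$ is produced only by $s\circ s\circ s$; your phrase ``including the forced initial descent'' silently inserts a third application that the 0-1 sequence does not supply. What your induction actually establishes is the corrected claim $F\bigl(\sin(\arctan(\cos 0))\bigr)=q$, equivalently the claim obtained by prepending one arbitrary digit to the sequence before substituting. To be fair, this off-by-one defect is present in the corollary as printed, and the paper's informal argument glosses over it entirely---so your instinct to isolate this as the main obstacle was sound; the honest conclusion, however, is that the statement needs one extra innermost application (or a redefinition of the 0-1 sequence that labels the forced edge), not that the observation $s(1)=c(1)$ closes the gap.
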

Corollary \ref{coro:03-en} demonstrates a method to obtain any reduced rational number within the interval $(0,1)$. The existence of the composite function $F(x)$ can be deduced from Theorem \ref{thm:e1}, the finiteness of $F(x)$ is evident from the limited length of the branch of the S-tree that connects $\frac{1}{1}$ and $q^2$. Additionally, the result of $\cos{0}$ is $1$. Regarding reduced rational numbers greater than $1$, we have ${\frac{b}{a}}=\tan{\left(\arccos{\left(\sin{\left(\arctan{\left(\frac{a}{b}\right)}\right)}\right)}\right)}$. Therefore, all reduced rational numbers in $(0,+\infty)$ can be generated by pressing a finite sequence of the provided buttons, thereby confirming the proposition of the problem.

\section{Conclusion}
\ 

This paper introduces the construction of the S-tree and the SC-tree, utilizing continued fractions and 0-1 sequences to explore the relationships between these two trees and the Stern-Brocot tree and the Calkin-Wilf tree. To begin, we employ elementary number theory and descriptions of addable vertices to derive the properties of both the S-tree and the SC-tree. As demonstrated in Theorem \ref{thm:e1} and Theorem \ref{thm:e3+}, we establish their one-to-one correspondence with reduced rational numbers within the intervals $(0,1]$ and $(0,+\infty)$, respectively. In other words, both the S-tree and the SC-tree provide an intuitive explanation for the countability of the set $Q$. 

By using '$0$' to represent left and '$1$' to represent right, we can describe any path to a given vertex in the binary tree as a sequence of 0-1. This allows us to illustrate the intrinsic connection between the structure of continued fractions of a given rational number and its corresponding 0-1 sequence in the trees mentioned earlier. Through this process, we successfully developed locating algorithms for the S-tree and the SC-tree, as outlined in Theorem \ref{thm:en2}, Theorem \ref{thm:adde5} and Corollary \ref{coro:adde2}, respectively. To be specific, for a vertex $q\in (0,1)$ of the S-tree, given by the continued fraction $q=[0,a_1,\cdots,a_k]$, it occupies the position of the $N$-th vertex in level $M$, where $M=-1+\Sigma_{i=1}^k a_i$, and $N=1+\Sigma_{i=1}^{k-1}2^{(-1+\Sigma_{j=1}^{i}a_j)}$ ( when $k=1$, set $N=1$). For a rational number $t\in (0,1)$ represented as $t=[0,a_1,a_2,\cdots,a_k]$ where $a_i\ge 1,a_k\ge 2$, its 0-1 sequence of the SC-tree is $$\overline{1\underbrace{0\cdots0}_{a_1-1}1\underbrace{0\cdots0}_{a_{2}-1}1\underbrace{0\cdots0}_{a_{k-1}-1}\cdots 1\underbrace{0\cdots0}_{a_{k}-2}}.$$ As for a rational number $t\in (1,+\infty)$ represented as  $t=[a_1,a_2,\cdots,a_k]$ where $a_i\ge 1,a_k\ge 2$,its 0-1 sequence of the SC-tree is $$\overline{0\underbrace{0\cdots0}_{a_1-1}1\underbrace{0\cdots0}_{a_{2}-1}1\underbrace{0\cdots0}_{a_{k-1}-1}\cdots 1\underbrace{0\cdots0}_{a_{k}-2}}.$$

Intriguing insights emerge regarding the relationships between the SC-tree and the Stern-Brocot tree. These two trees exhibit only sequential differences in the vertices within each level, and we can directly characterize algorithms corresponding to these vertex positions based on their structures. As presented in Theorem \ref{thm:en3}, if the value of  $\overline{1E}$ is $\frac{\Sigma_{i=1}^{t}2^{r_i-1}(2^{k_i+1}-2)}{2}$ and $\overline{1F}$ is $\lfloor\frac{\Sigma_{i=1}^{t}2^{r_i-1}(2^{k_i}+1)}{2}\rfloor$ , where $t,r_i,k_i\in Z_{+}$ and $r_{i+1}-(r_i+k_i)\geq 0$. Then the vertex ordered by $\overline{E}$ of the Stern-Brocot tree is the same as the vertex ordered by $\overline{F}$ of the SC-tree. The same applies to the S-tree and specific segments of the Calkin-Wilf tree.  As shown by Theorem \ref{thm:add2-en}, let the binary expansion of $N_s-1$ be  $\Sigma_{i=1}^{k}2^{r_i}$ ( for $u<v$, $r_u>r_v$ ), then the $N_s$-th vertex in the $M$-th level ($M\ge 1, N_s\le 2^{M-1}$) of the S-tree corresponds the $N_c$-th vertex in the $M$-th level of the Calkin-Wilf tree, where $N_c=\left\{
\begin{aligned}
    &1+\Sigma_{j=1}^{\frac{k}{2}}(2^{r_{2j-1}+1}-2^{r_{2j}+1}),\ \ 2\mid {k} \\
    &1+2^{M}-2^{r_1}+\Sigma_{j=1}^{\frac{k-1}{2}}(2^{r_{2j}+1}-2^{r_{2j+1}+1}),\ \ \ \ \ 2\nmid {k}\\
\end{aligned}
    \right.$. Significantly, these results establish a tight association among the four trees mentioned above, creating a complete linking loop, as illustrated in Figure \ref{image:loop2}. This linkage offers a clearer, more direct, and easier-to-understand relationship between the Stern-Brocot tree and the Calkin-Wilf tree, as all locating algorithms involved rely solely on binary operations. 

As for applications, we reveal that the far-right branch of the S-tree consists of the Fibonacci sequence, as indicated in Corollary \ref{coro:02-en}. Additionally, Corollary \ref{coro:03-en} demonstrates that the problem 2nd of USAMO in 1995\cite{GP} can also be addressed through the structure of the S-tree. 

However, other fascinating applications of the S-tree and the SC-tree remain unexplored and await future research endeavors, including their potential uses in describing the properties of complex numbers $a+bi$, where $a$ and $b$ are rational numbers.

 \begin{figure}[ht]
  \centering
  \includegraphics[width=0.38\textwidth]{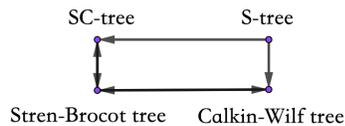}
  \caption{\label{image:loop2}The linking loop among the four trees}
\end{figure} 
\appendix


\begin{acknowledgment}{Acknowledgment.}
We thank Hongjie Zhou for the valuable discussions and for indicating us the problem. 
\end{acknowledgment}

\begin{biog}
\item[Ziting Wang] is a Ph.D. student at Capital Normal University, conducting research in Mathematical Education.
\begin{affil}
Department of Mathematical Sciences, Capital Normal University, Bejing, 100048\\
wangziting@cnu.edu.cn
\end{affil}

\item[Ruijia Guo] is a master's graduate from Capital Normal University, currently actively engaged in teaching activities in a secondary school. 
\begin{affil}
Department of Mathematical Sciences, Capital Normal University, Bejing, 100048\\
\end{affil}

\item[Yixin Zhu] ris a professor and doctoral supervisor at CNU. His expertise lies in the fields of Algebra and Mathematics Education.
\begin{affil}
Department of Mathematical Sciences, Capital Normal University, Bejing, 100048\\
\end{affil}
\end{biog}
\vfill\eject

\end{document}